\theoremstyle{plain}
\newtheorem{thm}{Theorem}
\newtheorem{prop}{Proposition}[section]
\newtheorem{lem}[prop]{Lemma}
\newtheorem{cor}[prop]{Corollary}
\newcommand {\R} {\mathbb{R}} 
 \newcommand {\N} {\mathbb{N}}
\newcommand {\p} {\partial}
\newcommand {\D} {\Delta}
\DeclareMathOperator{\conv}{conv}
\DeclareMathOperator {\dist} {dist}
\DeclareMathOperator {\supp} {supp}
\DeclareMathOperator{\F} {\mathcal{F}}
\title{On Single Measurement Stability for the Fractional Calder\'on Problem}
\author{Angkana Rüland}
\address{Institut für Angewandte Mathematik, Ruprecht-Karls-Universität Heidelberg, Im Neunheimer Feld 205, 69120 Heidelberg, Germany}
\email{Angkana.Rueland@uni-heidelberg.de}
\begin{document}
\maketitle

\begin{abstract}
In this short article we complement the known single measurement uniqueness result for the fractional Calder\'on problem by a single measurement logarithmic stability estimate. To this end, we combine quantitative propagation of smallness results for the Caffarelli-Silvestre extension and a boundary doubling estimate. The latter yields control of the order of vanishing of solutions to the fractional Schrödinger equation and provides the central step in passing from the quantitative unique continuation for solutions to the logarithmic stability of the potential $q$.
%\keywords{fractional Calder\'on problem, single measurement, stability, order of vanishing estimates}
\end{abstract}

%\subjclass{35R30,35B35}

\section{Introduction}

In this article we study the stability of the single measurement uniqueness result for the fractional Calder\'on problem which had been established in \cite{GRSU18}. The fractional Calder\'on problem is a nonlocal inverse problem which deals with the recovery of the potential $q $ in the equation
\begin{align}
\label{eq:nonlocal}
\begin{split}
((-\D)^s + q )u & = 0 \mbox{ in } \Omega,\\
u & = f \mbox{ in } \Omega_e,
\end{split}
\end{align}
from the ``Dirichlet-to-Neumann'' type measurement $\Lambda_q(f):=(-\D)^s f|_{\Omega_e}$.
Here and throughout the following text, for simplicity, we assume that zero is not a Dirichlet eigenvalue of the fractional Schrödinger operator under consideration. Furthermore, we suppose that $s\in (0,1)$, $(-\D)^s$ denotes the fractional Laplacian which is defined through its Fourier symbol $|\xi|^{2s}$, $\Omega_e:= \R^n \setminus \overline{\Omega}$ and $f \in \widetilde{H}^s(\Omega_e)$. This problem had been introduced and first investigated in \cite{GSU16} as a \emph{nonlocal} inverse problem modelled on the classical Calder\'on problem \cite{Calderon, U09}.

Due to its nonlocality, the fractional Calder\'on problem displays features distinguishing it from its classical, local analogue. This includes striking Runge approximation results which are valid for the nonlocal operator and have led to the solution of the uniqueness question for the fractional Calder\'on problem in \cite{GSU16} with \emph{infinitely many} measurements and partial data. These ideas can also be quantified resulting in the optimal logarithmic stability of the fractional Calder\'on problem with infinitely many measurements (and almost scaling critical regularity) \cite{RS20a, RS18}. We also refer to \cite{R19, RS20, RS19} and \cite{GFR20, DSV14} for related results and ideas. 

In contrast to the local Calder\'on problem, the fractional Calder\'on problem is already a determined problem if one only considers the case of a \emph{single} measurement. Using this observation, in \cite{GRSU18} a \emph{single measurement} uniqueness and reconstruction result of the fractional Calder\'on problem was derived based on the strong unique continuation properties for the fractional Laplacian, see \cite{FF14, Rue15, GFR19, Y17}. In this note we prove that the single measurement uniqueness result for the fractional Calder\'on problem can be complemented with a \emph{single measurement stability} result. For further information on the fractional Calder\'on problem we refer to \cite{GLX17,HL19, HL20, CLR20, RueS19,LLR20} and the survey articles \cite{S17, Rue18}. We also point to \cite{CR20} for a recent interpretation of fractional Calder\'on type problems which provides a connection to inverse Robin problems with degenerate conductivity.

\subsection{Main result and strategy}

In this article, we prove a stability result complementing the single measurement uniqueness result from \cite{GRSU18}.  Using the notation from Section \ref{sec:prelims} for the function spaces, our main result reads:

\begin{thm}
\label{thm:single_meas_stab}
Let $\Omega, W \subset \R^n$ be open, bounded, non-empty Lipschitz sets such that $\overline{\Omega} \cap \overline{W} = \emptyset$. Let $f\in \widetilde{H}^{s+\epsilon}(W)\setminus \{0\}$ for some $\epsilon>0$ and let $F>0$ be such that
\begin{align*}
\frac{\|f\|_{H^{s}(W)}}{\|f\|_{L^2(W)}} \leq F.
\end{align*}
Assume that $q_1, q_2 \in C^{0,s}(\Omega)$ with $\supp(q_j) \subset \Omega' \Subset \Omega$ satisfy the a priori bound
\begin{align*}
\|q_j\|_{C^{0,s}(\Omega)} \leq E < \infty \mbox{ for } j\in\{1,2\}.
\end{align*}
Then, there exists a modulus of continuity $\omega(t)$ with $\omega(t) \leq C |\log(C t)|^{-\gamma}$ such that
\begin{align*}
\|q_1 - q_2\|_{L^{\infty}(\Omega)} \leq \omega(\|\Lambda_{q_1} f- \Lambda_{q_2}f\|_{H^{-s}(W)}).
\end{align*}
The constants $\gamma>0$, $C>1$ only depend on $\Omega, W, s, E, F, n, \|f\|_{H^{s+\epsilon}(W)}, \dist(\Omega',\partial \Omega)$.
\end{thm}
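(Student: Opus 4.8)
The proof will follow the classical paradigm for stability in Calderón-type problems, adapted to the single-measurement fractional setting. The starting point is the integral identity: for the single exterior datum $f$, denote by $u_j$ the solution of $((-\Delta)^s + q_j)u_j = 0$ in $\Omega$ with $u_j = f$ in $\Omega_e$. Then one has the weak formulation $\langle (\Lambda_{q_1} - \Lambda_{q_2})f, f\rangle$ expressed as an integral of $(q_1 - q_2)$ against $u_1 u_2$ (or $u_1$ and the difference $u_1 - u_2$), which gives control of a weighted spatial average of $q_1 - q_2$ by the data norm $\delta := \|\Lambda_{q_1}f - \Lambda_{q_2}f\|_{H^{-s}(W)}$. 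The key difficulty, and what distinguishes this from the infinitely-many-measurements case, is that a single $u_1$ does not separate points: one cannot invert the map $(q_1 - q_2) \mapsto (q_1 - q_2) u_1 u_2$ directly, and $u_1 u_2$ will generically vanish on a small set, so an $L^\infty$ bound on $q_1 - q_2$ must be extracted from an $L^1$- or $L^2$-type average with a weight that degenerates. This is precisely where the quantitative unique continuation and the boundary doubling estimate enter.

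\textbf{Step 1: Well-posedness and a priori bounds.} First I would record that under the $C^{0,s}(\Omega)$ bound $E$ on $q_j$ and the non-eigenvalue assumption, the solutions $u_j$ exist, are unique, and satisfy $\|u_j\|_{\widetilde{H}^s(\R^n)} \lesssim \|f\|_{\widetilde{H}^s(W)}$ with a constant depending only on the admissible data; moreover the difference $w := u_1 - u_2$ solves $((-\Delta)^s + q_1)w = (q_2 - q_1)u_2$ in $\Omega$ with $w = 0$ in $\Omega_e$, and energy estimates combined with the trace/Dirichlet-to-Neumann mapping give $\|w\|_{\widetilde{H}^s(\R^n)} \lesssim \delta$ (using $\langle(\Lambda_{q_1}-\Lambda_{q_2})f, g\rangle$ tested against extensions of arbitrary $g \in \widetilde{H}^s(W)$). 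Also record the higher regularity: since $f \in \widetilde{H}^{s+\epsilon}(W)$ and $q_j \in C^{0,s}$, the $u_j$ inherit some extra Sobolev (or Hölder) smoothness in a neighbourhood, which is needed so that the Caffarelli-Silvestre extensions $\tilde u_j$ are regular enough to apply the propagation-of-smallness estimates.

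\textbf{Step 2: Order of vanishing and propagation of smallness.} The heart of the argument is to convert smallness of the Cauchy data of the solutions on $W$ (more precisely, smallness of the trace and the fractional normal derivative, i.e. of $\tilde u$ and its weighted normal derivative on $W \times \{0\}$ for the Caffarelli-Silvestre extension) into smallness of $u_1$, hence of $w$, on an interior region of $\Omega$, with an explicit (logarithmic) rate. Here I would invoke the quantitative propagation of smallness for the Caffarelli-Silvestre extension together with the boundary doubling inequality controlling the maximal order of vanishing of solutions to $((-\Delta)^s + q)u = 0$: this yields a vanishing order bound $N = N(E,F,\Omega,W,\dots)$ so that $u_1$ cannot be too flat at any interior point, and a three-balls / Hadamard-type interpolation inequality of the form $\|u_j\|_{L^\infty(B)} \leq C \|u_j\|^{\mu}_{\text{(exterior Cauchy data on } W)} \|u_j\|^{1-\mu}_{\widetilde{H}^s}$ propagated from $W$ through $\Omega$ along chains of balls. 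Combining this with Step 1 gives a bound of the form $\|w\|_{L^2(\Omega')} \leq \omega_1(\delta)$ for a logarithmic modulus $\omega_1$, and simultaneously a lower bound ensuring $u_1$ (or $|u_1|$ on a positive-measure set whose size is controlled in terms of $N$) is bounded below.

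\textbf{Step 3: From the integral identity to the $L^\infty$ bound on $q_1 - q_2$.} Insert the estimates into the integral identity. Writing $\int_\Omega (q_1 - q_2) u_1 u_2 \, dx = \langle(\Lambda_{q_1}-\Lambda_{q_2})f, f\rangle = O(\delta)$ and $\int_\Omega (q_1 - q_2) u_1 (u_1 - u_2)\, dx = O(\delta \cdot \|u_1\|_{\widetilde H^s})$, one gets $\int_\Omega (q_1 - q_2) u_1^2 \, dx = O(\delta) + O(\omega_1(\delta))$. The order-of-vanishing control gives that $u_1^2$ is bounded below by a positive quantity on a set $G \subset \Omega'$ of measure controlled below; combined with the Hölder bound $\|q_1 - q_2\|_{C^{0,s}} \leq 2E$ and a standard interpolation (an $L^\infty$ bound follows from an $L^1$ or $L^2$ average over a small ball plus a Hölder seminorm bound, quantitatively via $\|g\|_{L^\infty} \lesssim \|g\|_{L^1(B_\rho)} \rho^{-n} + [g]_{C^{0,s}} \rho^s$ optimised in $\rho$), one converts the weighted average bound into $\|q_1 - q_2\|_{L^\infty(\Omega)} \leq \omega(\delta)$ with $\omega(t) \leq C|\log(Ct)|^{-\gamma}$, where the loss of the exponent $\gamma$ and the extra constants all come from chaining the logarithmic modulus from Step 2 with the interpolation exponents here. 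The main obstacle is Step 2: making the propagation of smallness quantitative uniformly in the admissible class, in particular getting the vanishing-order bound via the boundary doubling estimate so that the ``good set'' $G$ where $u_1^2$ is bounded below has controlled measure — this is exactly the ingredient the abstract announces as the central new step, and everything downstream is then a matter of bookkeeping the moduli of continuity.
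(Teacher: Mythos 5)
Your Step 2 (quantitative propagation of smallness via the Caffarelli--Silvestre extension, together with a boundary doubling/vanishing-order bound that is uniform in the admissible class, followed by the ASV-type interpolation) is exactly the architecture the paper uses, so the skeleton is right. Two issues, however, one minor and one substantive.

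Minor: in Step~1 you assert that energy estimates with test functions give $\|w\|_{\widetilde H^s(\R^n)}\lesssim\delta$. This cannot hold -- the data $\delta=\|(\Lambda_{q_1}-\Lambda_{q_2})f\|_{H^{-s}(W)}$ is a Cauchy-data smallness condition on $W$, and transferring it to $\Omega$ necessarily incurs the logarithmic loss of quantitative unique continuation (indeed you derive only $\|w\|_{L^2(\Omega')}\leq\omega_1(\delta)$ in Step~2, which is the correct statement; the linear claim in Step~1 is inconsistent with this and, fortunately, never used).

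Substantive: your Step~3 feeds the wrong quantity into the interpolation lemma. The Alessandrini identity gives control of the \emph{signed global} integral $\int_\Omega(q_1-q_2)u_1u_2\,dx$, and hence of $\int_\Omega(q_1-q_2)u_1^2\,dx$ up to the $\omega_1(\delta)$ correction. But the ASV-type argument of Lemma~\ref{lem:boundaryAVS} requires $\bigl(\int_{B_r'(\bar x_0)}|q_1-q_2|^2u_1^2\,dx\bigr)^{1/2}$ to be small for balls near the maximum of $|q_1-q_2|$ -- i.e.\ the $L^2$-\emph{norm} of $(q_1-q_2)u_1$, localisable to arbitrary balls. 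A small signed global average does not control the localised $L^2$ norm, because contributions of opposite sign over different regions of $\Omega$ can cancel. The paper avoids this by never using the Alessandrini pairing: it writes the \emph{pointwise} identity
\begin{align*}
u_1(q_1-q_2) = -c_s\Bigl(\lim_{x_{n+1}\to0}x_{n+1}^{1-2s}\p_{n+1}\tilde u_1 - \lim_{x_{n+1}\to0}x_{n+1}^{1-2s}\p_{n+1}\tilde u_2\Bigr) - q_2(u_1-u_2),
\end{align*}
bounds $\|u_1(q_1-q_2)\|_{H^{-s}(\Omega)}$ logarithmically via Proposition~\ref{prop:stab}, bounds $\|u_1(q_1-q_2)\|_{H^s(\Omega)}$ a priori (Kato--Ponce plus Vishik--Eskin), and then interpolates to get $\|u_1(q_1-q_2)\|_{L^2(\Omega)}\leq\omega(\delta)$. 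To repair your argument you must replace the integral-identity step by this pointwise manipulation (or otherwise produce an estimate for $\|(q_1-q_2)u_1\|_{L^2}$ rather than $\int(q_1-q_2)u_1^2$); as it stands Step~3 has a genuine gap.
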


We remark that similarly as in the case of the Calder\'on problem the logarithmic stability is optimal (possibly up to the exponent $\gamma$), see \cite{RS18} for a proof of this in the case of infinitely many measurements and \cite[Section 6.1]{GRSU18} for the optimality of logarithmic stability estimates for the UCP for the fractional Laplacian.

In order to prove the single measurement stability result, we proceed in two main steps
which are inspired by the uniqueness proof from \cite{GRSU18}: First, by the strong unique continuation properties of the fractional Laplacian, it is possible to reconstruct $u$ from the data $(f, (-\D)^s u|_{W})$ for a \emph{single}, nontrivial $f$. This step can be quantified following ideas in \cite{RS20a} which had, in parts, already been explained in \cite{GRSU18}. In a second step, we seek to exploit that if
$u : \R^n \rightarrow \R$ is known, it is possible to obtain the unknown potential $q$ as
\begin{align*}
q(x):= - \frac{(-\D)^s u(x)}{u(x)} \mbox{ for } x \in \Omega,
\end{align*}
provided there is sufficient control on the zeros of $u(x)$. In \cite{GRSU18} this was achieved by a weak unique continuation result if $q \in C^{0}(\Omega)$ and a unique continuation result from measurable sets in the case that $q\in L^{\infty}(\Omega)$ (and $s\in (1/4,1)$), respectively. The quantification of this step thus poses significant challenges, in that one needs to infer \emph{quantitative control} on the zero set of $u$. Here we follow an idea that had been introduced in \cite{S10,ASV13} in the context of non-degenerate equations and achieve the required control by deducing boundary doubling estimates. While the overall strategy follows the ideas of \cite{S10,ASV13}, we emphasise that in exploiting the Caffarelli-Silvestre extension \cite{CS07} which realises the fractional Laplacian, in general, we here no longer deal with a \emph{uniformly} elliptic but with a \emph{degenerate} elliptic equation (see \eqref{eq:CS_ext} below).

More precisely, we provide the following estimates:

\begin{itemize}
\item[(i)] We first transport information from $W$ to $\Omega$, i.e., under suitable a priori bounds for $f \neq 0$ and the a priori bound $\|q_j\|_{L^{\infty}(\Omega)}\leq M < \infty$ for the potentials $q_j$ we show that
\begin{align}
\label{eq:prop_smallness}
\begin{split}
\|u_1 - u_2\|_{H^{s}(\Omega)} &\leq C(M,\|f\|_{\widetilde{H}^{s+\epsilon}(W)}) \omega(\|(-\D)^s u_1 - (-\D)^s u_2 \|_{H^{-s}(W)}),\\
\|(-\D)^s u_1 - (-\D)^s u_2 \|_{H^{-s}(\Omega)} &\leq C(M,\|f\|_{\widetilde{H}^{s+\epsilon}(W)}) \omega(\| (-\D)^s u_1 - (-\D)^s u_2 \|_{H^{-s}(W)}),
\end{split}
\end{align}
where $\omega(t) \leq C |\log(t)|^{-\gamma}$ for some $\gamma>0$ and $\epsilon>0$.
Here $u_1, u_2$ are solutions to \eqref{eq:nonlocal} with potentials $q_1, q_2$. We discuss the derivation of these results in Section \ref{sec:smallness}. They build on quantitative unique continuation estimates which had already been derived earlier in \cite{RS20a} and \cite{GRSU18}.
\item[(ii)] As a second step and as some of the main results of the article, we deduce both \emph{bulk} and \emph{boundary doubling estimates}. The latter then leads to a controlled order of vanishing for the nonlocal problem. The rate of vanishing depends on $W,\Omega,n$, $\frac{\|f\|_{H^s(W)}}{\|f\|_{L^2(W)}}$ and on $\|q\|_{L^{\infty}(\Omega)}$: In other words, in the notation from Section \ref{sec:prelims}, we infer that for some exponent $\beta>0$ and some constant $C>1$ which depend on $\Omega, W, n, s, \|q\|_{L^{\infty}(\Omega)}$ and $\frac{\|f\|_{H^s(W)}}{\|f\|_{L^2(\Omega)}}$ (but not on $u|_{\Omega}$) and for all radii $r\in (0,r_0)$ and $x_0 \in \Omega$ such that $\dist(x_0,\partial \Omega)\geq 2r_0>0$ we have for solutions $u$ to \eqref{eq:nonlocal}
\begin{align*}
\| u\|_{L^2(B_{r}'(x_0))} \geq C r^{\beta}.
\end{align*} 
These estimates will be derived in Section \ref{sec:doubling}.
\end{itemize}
Finally, invoking an interpolation type lemma from \cite{ASV13}, the combination of the two steps (i), (ii) implies the desired overall single measurement stability result.

As in \cite{RS20a} in our analysis and derivation of the quantitative unique continuation estimates we heavily rely on the connection between the fractional Laplacian and the Caffarelli-Silvestre extension. Indeed, using the Caffarelli-Silvestre extension \cite{CS07}, the equation \eqref{eq:nonlocal} can also be viewed as a problem in the upper half-plane $\R^{n+1}_+$: For $s\in (0,1)$ we consider the following equation
\begin{align}
\label{eq:CS_ext}
\begin{split}
\nabla \cdot x_{n+1}^{1-2s} \nabla \tilde{u} & = 0 \mbox{ in } \R^{n+1}_+,\\
\tilde{u} & = u \mbox{ on } \R^n,
\end{split}
\end{align}
where $u$ is the solution to \eqref{eq:nonlocal} and where we identified $\R^n \times \{0\}$ with $\R^n$. Moreover, in a weak sense, $(-\D)^s u = c_s \lim\limits_{x_{n+1}\rightarrow 0} x_{n+1}^{1-2s} \p_{n+1} \tilde{u}$ for some constant $c_s \neq 0$. In particular, $\tilde{u}|_{W} =f$. In the sequel, we will often switch between the interpretations \eqref{eq:nonlocal} and \eqref{eq:CS_ext}. The Caffarelli-Silvestre interpretation of the problem \eqref{eq:nonlocal} lies at the heart of both of our main steps (i), (ii) outlined above, as it allows us to make use of techniques for (degenerate) elliptic, local equations in deducing quantitative unique continuation arguments. 

\subsection{Outline of the article}
The remainder of the article is organised as follows: After briefly recalling our notation and some preliminary facts on the Caffarelli-Silvestre extension in Section \ref{sec:prelims}, in Section \ref{sec:smallness} we explain the derivation of the estimates from Step (i). Here we make use of the earlier bounds from \cite{GRSU18} and \cite{RS20a}. In Section \ref{sec:doubling} -- which constitutes the central estimates of the article -- we deduce both bulk and boundary doubling estimates. Related ideas had already been used in \cite{Rue17} but in the context of compact manifolds. In order to deal with the non-compactness of our setting, we make use of the a priori estimates for solutions to \eqref{eq:nonlocal} and the bounds for $f$ in terms of $\frac{\|f\|_{H^s(W)}}{\|f\|_{L^2(W)}}$. Finally, in Section \ref{sec:single_meas} we prove the desired stability result.

\section{Notation and some Preliminaries}
\label{sec:prelims}

In this section, we recall some of the notation for fractional Sobolev spaces and some basic estimates for the Caffarelli-Silvestre extension.

For $Q \subset \R^n$ open and $s\in \R$, we will use the notation $H^{s}(Q)$ to denote the (fractional) Sobolev space given by
\begin{align*}
H^{s}(Q):= \{f : \Omega \rightarrow \R: \ \|f\|_{H^s(Q)}<\infty \},
\end{align*}
where $\|f\|_{H^{s}(Q)}:= \inf\{\|F\|_{H^s(\R^n)}: \ F|_{Q} = f\}$. The whole-space norm $\|\cdot\|_{H^s(\R^n)}$ is defined by its Fourier multiplier:  
\begin{align*}
\|f\|_{H^s(\R^n)}:= \left(\int\limits_{\R^n}
(1+|\xi|^2)^s |\mathcal{F}(f)(\xi)|^2 d\xi \right)^{\frac{1}{2}},
\end{align*}
where $\F(f)$ denotes the Fourier transform of $f$. 

We will also be working with two further classes of Sobolev functions:
The Sobolev functions with compact support in a closed set $\overline{Q} \subset \R^n$ will be denoted by $H^{s}_{\overline{Q}}$. For $\Omega \subset \R^n$ open, the completion of $C_c^{\infty}(\Omega)$ in $H^s(\R^n)$ will be denoted by $\widetilde{H}^s(\Omega)$. In the case of Lipschitz domains these two spaces are equal for any $s\in \R$.

Finally, since we will strongly rely on properties of solutions to \eqref{eq:nonlocal} and on their Caffarelli-Silvestre extensions \eqref{eq:CS_ext}, we recall the basic a priori estimates for these. First, energy estimates (see \cite[Lemma 2.3]{GSU16}) yield that for bounded, open sets $\Omega\subset \R^n$, $s\in (0,1)$ and potentials $q \in L^{\infty}(\Omega)$ for solutions to \eqref{eq:nonlocal} (where we assume that zero is not a Dirichlet eigenvalue of the fractional Schrödinger operator) we have
\begin{align}
\label{eq:apriori_nonloc}
\|u|_{\Omega}\|_{H^s_{\overline{\Omega}}} \leq C \|f\|_{H^s(W)}.
\end{align} 
Since $\Omega$ and $W$ are disjoint, in the following sections, with a slight abuse of notation, we will also write $\|u\|_{H^{s}_{\overline{\Omega}}}$ instead of $\|u|_{\Omega}\|_{H^s_{\overline{\Omega}}}$ for solutions $u$ to \eqref{eq:nonlocal}.
For the Caffarelli-Silvestre extension $\tilde{u}$ of a solution $u$ to \eqref{eq:nonlocal}, it holds that
\begin{align}
\label{eq:apriori_grad}
\|x_{n+1}^{\frac{1-2s}{2}} \nabla \tilde{u}\|_{L^2(\R^{n+1}_+)} \leq C \|u\|_{H^s(\R^{n})}.
\end{align}
Hence, by a combination of the previous two bounds we then also obtain
\begin{align}
\label{eq:aux_3}
\|x_{n+1}^{\frac{1-2s}{2}} \nabla \tilde{u}\|_{L^2(\R^{n+1}_+)} \leq C \|f\|_{H^s(W)},
\end{align}
if $u$ is a solution to \eqref{eq:nonlocal} with $q \in L^{\infty}(\Omega)$.

Furthermore, Poincar\'e inequalities are available for the Caffarelli-Silvestre extension with zero boundary conditions on $\R^n$ (see for instance \cite[Lemma 4.7]{RS20a}). If $\R^n \setminus (\overline{\Omega}\cup \overline{W}) $
is an open set and $\Omega$, $W$ are bounded, open sets, for a bounded set $K \subset \R^{n+1}_+$ it therefore also holds that 
\begin{align}
\label{eq:apriori_L2}
\|x_{n+1}^{\frac{1-2s}{2}}  \tilde{u}\|_{L^2(K)} \leq C(W,\Omega,K)\|x_{n+1}^{\frac{1-2s}{2}} \nabla \tilde{u}\|_{L^2(\R^n)} \leq  C(W,\Omega,K)\|f\|_{H^s(W)}.
\end{align}
Indeed, \eqref{eq:apriori_L2} follows from a two fold application of Poincar\'e's inequality: Assuming without loss of generality that $K\subset Q \times [c,d]:=[a_1,b_1]\times \cdots \times [a_n, b_n]\times [c,d]$ for $a_j,b_j,c,d\in \R$ with $d>c\geq 0$, $b_j>a_j$, we first apply the Poincar\'e inequality (for instance from \cite[Lemma 4.7]{RS20a}) in the normal direction starting from a boundary cube $Q':=[a'_1,b'_1]\times \cdots \times [a_n',b_n'] \subset \R^n\setminus (\overline{\Omega} \cup \overline{W})$ where $u$ vanishes. This yields
\begin{align}
\label{eq:Poinc_aux1}
\|x_{n+1}^{\frac{1-2s}{2}} \tilde{u}\|_{L^2(Q'\times [c,d])}\leq C(c,d,Q')\|x_{n+1}^{\frac{1-2s}{2}} \nabla \tilde{u}\|_{L^2(Q'\times [0,d])}).
\end{align}
In a second step, we apply a tangential Poincar\'e estimate (which follows from the fundamental theorem of calculus and a density argument) to bound
\begin{align}
\label{eq:Poinc_aux2}
\|x_{n+1}^{\frac{1-2s}{2}} \tilde{u}\|_{L^2(Q\times [c,d])}
\leq C(Q,Q')(\|x_{n+1}^{\frac{1-2s}{2}} \tilde{u}\|_{L^2(Q'\times [c,d])} + \|x_{n+1}^{\frac{1-2s}{2}} \nabla \tilde{u}\|_{L^2((\conv(Q'\cup Q))\times [c,d])}),
\end{align}
where $\conv(Q'\cup Q)$ denotes the convex hull of $Q'\cup Q$. Combining \eqref{eq:Poinc_aux1}, \eqref{eq:Poinc_aux2} and \eqref{eq:aux_3} then implies \eqref{eq:apriori_L2}.

For balls in $\R^n$ or $\R^{n+1}_+:=\{x=(x',x_{n+1})\in \R^{n+1}: \ x_{n+1}\geq 0\}$ we use the following notation
\begin{align*}
B_{r}'(x_0)&:=\{x\in \R^n: \ |x-x_0|< r\}, \
B_{r}^+((x_0,0)):=\{x \in \R^{n+1}_+: \ |x-(x_0,0)|<r\}, \\
B_{r}(\bar{x})&:= \{x \in \R^{n+1}_+: \ |x-\bar{x}|<r\},
\end{align*}
if $x_0 \in \R^n$ and $\bar{x} \in \R^{n+1}_+$ is such that $B_{r}(\bar{x}) \subset (\R^{n+1}_+)^{\circ}$. If $x_0\in \R^n$ is the origin, we also simply omit the centre point in the notation and write $B_r'$ and $B_{r}^+$, respectively.

\section{Transportation of Smallness}
\label{sec:smallness}
In this section we provide the proof of step (i) from the introduction.
To this end, we first recall the following propagation of smallness estimate from \cite{GRSU18}:

\begin{prop}[Proposition 6.1 in \cite{GRSU18}]
\label{prop:stab_UCP}
Let $\Omega, W \subset \R^n$ with $n\geq 1$ be bounded, open, non-empty Lipschitz domains with $\overline{\Omega} \cap \overline{W}= \emptyset$. Let $s\in (0,1)$ and $s' \in (0,s)$ and assume that $q\in L^{\infty}(\Omega)$ with $\|q\|_{L^{\infty}(\Omega)} \leq M<\infty$. Let $v\in H^s_{\overline{\Omega}}$.
Then there exist constants $C,\sigma>0$ (depending only on $\Omega, W, n, s, s',M$) such that for any $\tilde{E}>0$ one has
\begin{align*}
\|v\|_{H^{s'}_{\overline{\Omega}}} \leq C \frac{\tilde{E}}{\left| \log\left(C \frac{\tilde{E}}{\|(-\D)^s v\|_{H^{-s}(W)}} \right) \right| ^{\sigma}},
\end{align*}
whenever $\|v\|_{H^{s}_{\overline{\Omega}}} \leq \tilde{E}$.
\end{prop}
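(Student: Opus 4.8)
The plan is to reduce Proposition~\ref{prop:stab_UCP} to a quantitative unique continuation statement for the Caffarelli--Silvestre extension, which is the natural place where the nonlocal operator $(-\Delta)^s$ becomes a local (albeit degenerate) elliptic problem. First I would fix $v \in H^s_{\overline{\Omega}}$, write $\tilde v$ for its Caffarelli--Silvestre extension solving $\nabla \cdot x_{n+1}^{1-2s} \nabla \tilde v = 0$ in $\R^{n+1}_+$, and observe that $g := (-\Delta)^s v = c_s \lim_{x_{n+1} \to 0} x_{n+1}^{1-2s} \partial_{n+1} \tilde v$ is supported in $\overline{W} \cup \overline{\Omega}$, with $g|_{W}$ small in $H^{-s}(W)$. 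The key input is a quantitative three-balls / propagation-of-smallness estimate across the ``slit'' $\R^n \setminus (\overline{\Omega} \cup \overline{W})$: since $\tilde v$ vanishes together with its weighted conormal derivative on an open subset of $\R^n$ between $W$ and $\Omega$, a doubling-inequality-based argument (as in \cite{RS20a}) yields that the weighted Dirichlet energy of $\tilde v$ on a bounded cylinder above $\Omega$ is controlled by a negative power of a logarithm of the ratio between the global energy and a local Cauchy-data quantity near $W$.

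The second step is to convert that energy estimate on the extension back into the claimed bound on $\|v\|_{H^{s'}_{\overline{\Omega}}}$. Here I would use the trace characterisation of $\widetilde{H}^s$-type norms via the weighted extension energy: $\|v\|^2_{H^s_{\overline{\Omega}}}$ is comparable to $\|x_{n+1}^{(1-2s)/2} \nabla \tilde v\|^2_{L^2(\R^{n+1}_+)}$ by \eqref{eq:apriori_grad} and its converse, together with \eqref{eq:apriori_L2}. The loss from $s$ to $s'<s$ is the usual interpolation gain that upgrades a bound on a low-regularity quantity plus the a priori bound $\|v\|_{H^s_{\overline{\Omega}}} \le \tilde E$ into a bound with a (possibly worse, but still positive) logarithmic exponent $\sigma$: one interpolates $H^{s'} = [L^2, H^s]_\theta$ with $\theta = s'/s$, using the smallness coming from unique continuation in the $L^2$-type slot and the a priori bound $\tilde E$ in the $H^s$ slot. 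The precise bookkeeping of how the Cauchy data near $W$ is dominated by $\|(-\Delta)^s v\|_{H^{-s}(W)}$ (controlling both the Dirichlet trace $v|_W$ — which is zero since $v \in H^s_{\overline \Omega}$ — and the Neumann trace $g|_W$) is routine once the geometry is set up; note $v|_{W} = 0$ simplifies this considerably, so only the weighted conormal derivative enters.

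The main obstacle, and the reason this is nontrivial, is the non-compactness of $\R^{n+1}_+$ combined with the degeneracy of the weight $x_{n+1}^{1-2s}$: the standard propagation-of-smallness machinery (Carleman estimates, doubling inequalities, quantitative unique continuation from an interior open set) is cleanest on compact manifolds or bounded domains, whereas here $\tilde v$ lives on an unbounded half-space and need not decay in a controlled way a priori. This is handled by combining the global energy bound \eqref{eq:aux_3} and the Poincaré-type control \eqref{eq:apriori_L2}, which together tame the behaviour at spatial infinity and near $x_{n+1} = 0$, so that the three-balls iteration can be localised to a fixed bounded cylinder enclosing $\Omega$ and a reference region in the slit near $W$. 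Since the statement being proved is literally quoted from \cite[Proposition 6.1]{GRSU18}, I would present the argument as a recollection: set up the extension, cite the quantitative UCP / doubling estimate for the degenerate equation, and then perform the interpolation to pass from the energy bound to the stated $H^{s'}_{\overline{\Omega}}$ estimate with the logarithmic modulus.
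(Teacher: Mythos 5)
The paper does not prove this proposition; it quotes it verbatim from \cite[Proposition~6.1]{GRSU18} and then builds on it in Corollary~\ref{cor:stabilityu} and Proposition~\ref{prop:stab}. So there is no in-paper proof to compare against, and your sketch has to be judged against the cited argument and against internal consistency.

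Your high-level plan --- pass to the Caffarelli--Silvestre extension, run propagation of smallness for the degenerate operator $\nabla\cdot x_{n+1}^{1-2s}\nabla$, then interpolate to convert the a priori $H^{s}_{\overline{\Omega}}$ bound $\tilde E$ into a logarithmic $H^{s'}_{\overline{\Omega}}$ bound --- is the right architecture. However, there is a genuine error in the setup that would derail the argument as written. You assert that $g:=(-\Delta)^s v$ is supported in $\overline{W}\cup\overline{\Omega}$ and that ``$\tilde v$ vanishes together with its weighted conormal derivative on an open subset of $\R^n$ between $W$ and $\Omega$.'' Both claims are false. Since $v\in H^s_{\overline{\Omega}}$, $v$ itself is supported in $\overline{\Omega}$, but $(-\Delta)^s v$ is nonlocal and generically nonzero on all of $\R^n$, in particular on the slit $\R^n\setminus(\overline{\Omega}\cup\overline{W})$. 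On the slit you only know the Dirichlet trace $\tilde v = v = 0$; the weighted conormal trace $c_s(-\Delta)^s v$ is completely uncontrolled there. Vanishing Dirichlet data on a boundary patch gives no unique continuation on its own, so a propagation-of-smallness iteration cannot enter the half-space through the slit. The only place where \emph{both} pieces of Cauchy data are controlled is $W$: there $v|_W = 0$ trivially and $(-\Delta)^s v|_W$ is small by hypothesis, and this is the starting region for the Carleman/three-balls chain. The vanishing of $v$ on the slit \emph{is} used, but only to furnish Poincar\'e-type bounds (as in \eqref{eq:apriori_L2}) controlling the $L^2$ mass of $\tilde v$ in a bounded cylinder, which is what tames the non-compactness of $\R^{n+1}_+$; it is not a second source of Cauchy data. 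With that correction, the remainder of your outline --- chain-of-balls propagation from a bulk neighbourhood of $W\times\{0\}$ into a region above $\Omega$, a trace estimate to get back to $\Omega$, and $L^2$--$H^s$ interpolation to land in $H^{s'}_{\overline{\Omega}}$ --- matches the intended argument from \cite{GRSU18} and \cite{RS20a}.
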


Applying this to the function $v:=u_1 -u_2 \in H^s_{\overline{\Omega}}$, where $u_1, u_2$ are solutions to \eqref{eq:nonlocal} with $u_1|_{W}= f= u_2|_{W}$, we obtain a logarithmic stability result for the recovery of $u_1-u_2$ from the measurements $((-\D)^s u_1 - (-\D)^s u_2)|_{W}$.

\begin{cor}
\label{cor:stabilityu}
Let $\Omega, W \subset \R^n$ with $n\geq 1$ be bounded, open, non-empty Lipschitz domains with $\overline{\Omega} \cap \overline{W}= \emptyset$. Let $s\in (0,1)$ and $s' \in (0,s)$. 
Let $u_1, u_2$ be solutions to \eqref{eq:nonlocal} with data $f \in H^{s}_{\overline{W}} \setminus \{0\}$ and such that for some $M>0$
\begin{align*}
\|q_1\|_{L^{\infty}(\Omega)} , \|q_2 \|_{L^{\infty}(\Omega)} \leq M < \infty.
\end{align*}
Then there exist constants $C,\sigma>0$ (depending only on $\Omega, W, n,s, s', M$) such that 
\begin{align*}
\|u_1- u_2\|_{H^{s'}_{\overline{\Omega}}} \leq \frac{C(M) \|f\|_{H^s(W)}}{\left|\log\left(\frac{C(M)\|f\|_{H^s(W)}}{\|(\Lambda_{1}-\Lambda_2)f\|_{H^{-s}(W)}} \right)\right|^{\sigma}}.
\end{align*}
Here $\Lambda_j (f):= \Lambda_{q_j}(f) = (-\D)^s u_j|_{W} $ for $j\in \{1,2\}$.
\end{cor}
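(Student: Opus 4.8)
The plan is to obtain Corollary~\ref{cor:stabilityu} as a direct specialisation of Proposition~\ref{prop:stab_UCP}. First I would set $v := u_1 - u_2$. Since $u_1, u_2$ are both solutions to \eqref{eq:nonlocal} with the same exterior datum $f$, we have $u_1|_{\Omega_e} = f = u_2|_{\Omega_e}$, so $v|_{\Omega_e} = 0$; in particular $v$ is supported in $\overline{\Omega}$ and $v \in H^s_{\overline{\Omega}}$, so that Proposition~\ref{prop:stab_UCP} is applicable to $v$. Moreover $(-\D)^s v = (-\D)^s u_1 - (-\D)^s u_2$, and restricting to $W$ this equals $(\Lambda_1 - \Lambda_2)f$ by the definition of $\Lambda_j$. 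This identifies the quantity $\|(-\D)^s v\|_{H^{-s}(W)}$ appearing in Proposition~\ref{prop:stab_UCP} with the data term $\|(\Lambda_1 - \Lambda_2)f\|_{H^{-s}(W)}$ of the corollary.

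Next I would fix the a priori bound $\tilde E$. By the energy estimate \eqref{eq:apriori_nonloc} applied to $u_1$ and to $u_2$ separately (both with potentials bounded by $M$ in $L^\infty(\Omega)$), we get $\|u_j\|_{H^s_{\overline{\Omega}}} \leq C(M)\|f\|_{H^s(W)}$ for $j\in\{1,2\}$, hence by the triangle inequality $\|v\|_{H^s_{\overline{\Omega}}} \leq 2C(M)\|f\|_{H^s(W)} =: \tilde E$. Plugging this choice of $\tilde E$ into the conclusion of Proposition~\ref{prop:stab_UCP} yields
\begin{align*}
\|v\|_{H^{s'}_{\overline{\Omega}}} \leq \frac{C\,\tilde E}{\left|\log\!\left(\dfrac{C\,\tilde E}{\|(-\D)^s v\|_{H^{-s}(W)}}\right)\right|^{\sigma}} = \frac{C(M)\|f\|_{H^s(W)}}{\left|\log\!\left(\dfrac{C(M)\|f\|_{H^s(W)}}{\|(\Lambda_1-\Lambda_2)f\|_{H^{-s}(W)}}\right)\right|^{\sigma}},
\end{align*}
after absorbing the numerical factor $2$ into the constant $C(M)$ (and noting that for the estimate to be meaningful one works in the regime where the argument of the logarithm is large, so that monotonicity of $t \mapsto t/|\log(A/t)|^\sigma$ lets one replace $\tilde E$ by the larger upper bound harmlessly). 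This is exactly the claimed estimate.

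There is essentially no serious obstacle here: the corollary is a bookkeeping consequence of the proposition together with the standard energy bound. The only minor points that need care are (a) checking that $v$ indeed lies in $H^s_{\overline{\Omega}}$ rather than merely in $H^s(\R^n)$, which uses the hypothesis that $\Omega$ is Lipschitz so that $H^s_{\overline{\Omega}} = \widetilde H^s(\Omega)$ and that $v$ vanishes in $\Omega_e$; (b) verifying that enlarging the a priori constant from $\|v\|_{H^s_{\overline\Omega}}$ to $2C(M)\|f\|_{H^s(W)}$ does not hurt — this is where one invokes the monotonicity of the right-hand side of the proposition's bound as a function of $\tilde E$ in the relevant range, or alternatively simply reapplies the proposition with the explicit a priori bound $2C(M)\|f\|_{H^s(W)}$; and (c) tracking that the constants $C,\sigma$ produced by Proposition~\ref{prop:stab_UCP} depend only on $\Omega, W, n, s, s', M$, so the same is true of the constants in the corollary once the $\|f\|_{H^s(W)}$-dependence has been made explicit.
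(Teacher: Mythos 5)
Your proposal is correct and follows the same route as the paper: apply Proposition~\ref{prop:stab_UCP} to $v := u_1 - u_2 \in H^s_{\overline{\Omega}}$, identify $(-\D)^s v|_W$ with $(\Lambda_1-\Lambda_2)f$, and use the a priori energy bound \eqref{eq:apriori_nonloc} together with the triangle inequality to furnish $\tilde E = C(M)\|f\|_{H^s(W)}$. The extra remarks you make (membership of $v$ in $H^s_{\overline{\Omega}}$, monotonicity in $\tilde E$) are sound but are left implicit in the paper.
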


\begin{proof}
We apply Proposition \ref{prop:stab_UCP} to the function $v := u_1 -u_2 \in H^{s}_{\overline{\Omega}}$ and note that $(-\D)^s v(x) = (\Lambda_1 - \Lambda_2)f(x)$. It thus remains to argue that 
\begin{align*}
\|v\|_{H^{s}_{\overline{\Omega}}}
\leq C(M)\|f\|_{H^s(W)}.
\end{align*}
This however follows by the triangle inequality and from the a priori estimates for solutions to the equation \eqref{eq:nonlocal}, see \eqref{eq:apriori_nonloc}:
\begin{align*}
\|v\|_{H^{s}_{\overline{\Omega}}} \leq \|u_1|_{\Omega}\|_{H^s_{\overline{\Omega}}} + \|u_2|_{\Omega}\|_{H^s_{\overline{\Omega}}}
\leq C(M)\|f\|_{H^s(W)}.
\end{align*}
\end{proof}

By interpolation the estimate from Corollary \ref{cor:stabilityu} can be upgraded to an $H^{s}_{\overline{\Omega}}$ stability estimate for $(u_1- u_2)|_{\Omega}$. By using the equation for $u_1, u_2$ in its Caffarelli-Silvestre extended version, this also directly implies an $H^{-s}(\Omega)$ estimate for $\lim\limits_{x_{n+1}\rightarrow 0} x_{n+1}^{1-2s} \p_{n+1} \tilde{u}_1 - \lim\limits_{x_{n+1}\rightarrow 0} x_{n+1}^{1-2s} \p_{n+1} \tilde{u}_2$.

\begin{prop}
\label{prop:stab}
Let $\Omega, W \subset \R^n$ with $n\geq 1$ be bounded, open, non-empty Lipschitz domains with $\overline{\Omega} \cap \overline{W}= \emptyset$.
Let $u_1, u_2$ be solutions to \eqref{eq:nonlocal} with data $f \in H^{s+\epsilon}_{\overline{W}} \setminus \{0\}$ for some $\epsilon>0$ and such that for some $M>0$
\begin{align*}
\|q_1\|_{L^{\infty}(\Omega)} , \|q_2 \|_{L^{\infty}(\Omega)} \leq M.
\end{align*}
Then there exist constants $C,\sigma>0$ (depending only on $\Omega, W, n,s, M,\epsilon$) such that 
\begin{align}
\label{eq:prop_small_norms}
\begin{split}
\|u_1- u_2\|_{H^{s}_{\overline{\Omega}}} &\leq  \frac{C(M) \|f\|_{H^{s+\epsilon}(W)}}{\left|\log\left(\frac{C(M)\|f\|_{H^{s+\epsilon}(W)}}{\|(\Lambda_{1}-\Lambda_2)f\|_{H^{-s}(W)}} \right)\right|^{\sigma}},\\
\|\lim\limits_{x_{n+1}\rightarrow 0} x_{n+1}^{1-2s} \p_{n+1}\tilde{u}_1- \lim\limits_{x_{n+1}\rightarrow 0} x_{n+1}^{1-2s} \p_{n+1} \tilde{u}_2\|_{H^{-s}(\Omega)} &\leq  \frac{C(M) \|f\|_{H^{s+\epsilon}(W)}}{\left|\log\left( \frac{C(M)\|f\|_{H^{s+\epsilon}(W)}}{\|(\Lambda_{1}-\Lambda_2)f\|_{H^{-s}(W)}} \right)\right|^{\sigma}}.
\end{split}
\end{align}
\end{prop}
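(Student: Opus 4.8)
The plan is to build on Corollary \ref{cor:stabilityu} in two moves: first, upgrade the $H^{s'}_{\overline{\Omega}}$ bound to a full $H^{s}_{\overline{\Omega}}$ bound by interpolating against the a priori $H^{s+\epsilon}$-type bound; second, convert the $H^s$-control of $u_1-u_2$ on $\Omega$ into $H^{-s}(\Omega)$-control of the Neumann data $\lim_{x_{n+1}\to 0} x_{n+1}^{1-2s}\p_{n+1}\tilde u_j$ via the fractional Schrödinger equation that $u_1-u_2$ satisfies.

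For the first move: by \eqref{eq:apriori_nonloc}, applied with $f\in H^{s+\epsilon}_{\overline{W}}$, we have $\|u_j|_{\Omega}\|_{H^{s}_{\overline\Omega}}\le C(M)\|f\|_{H^{s+\epsilon}(W)}$ (this is cruder than needed but suffices), so $\|u_1-u_2\|_{H^s_{\overline\Omega}}\le C(M)\|f\|_{H^{s+\epsilon}(W)}=:A$. Combining with the Corollary's bound $\|u_1-u_2\|_{H^{s'}_{\overline\Omega}}\le B$, where $B$ is the logarithmic quantity on the right, the standard Sobolev interpolation inequality $\|v\|_{H^{s}_{\overline\Omega}}\le C\|v\|_{H^{s'}_{\overline\Omega}}^{\theta}\|v\|_{H^{s+\epsilon}_{\overline\Omega}}^{1-\theta}$ — which one needs on the scale $H^{s'}\subset H^{s}\subset H^{s+\epsilon}$ with $\theta=\epsilon/(s+\epsilon-s')$ — gives $\|u_1-u_2\|_{H^{s}_{\overline\Omega}}\le C B^{\theta}A^{1-\theta}$. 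Here I must first secure $\|u_1-u_2\|_{H^{s+\epsilon}_{\overline\Omega}}\le C(M)\|f\|_{H^{s+\epsilon}(W)}$ by the analogue of \eqref{eq:apriori_nonloc} at regularity $s+\epsilon$ (valid since $q_j\in C^{0,s}$ is a bounded, sufficiently smooth multiplier on this scale when supported away from $\p\Omega$, which is exactly what the hypothesis $\supp q_j\subset\Omega'\Subset\Omega$ provides — though at this stage only the $L^\infty$ bound $M$ is invoked, so one should state the estimate with constant depending on $M$ and $\|f\|_{H^{s+\epsilon}}$). Since $B$ is logarithmically small and $A$ is just a fixed power of $\|f\|_{H^{s+\epsilon}(W)}$, the product $B^{\theta}A^{1-\theta}$ is again of the form $C(M)\|f\|_{H^{s+\epsilon}(W)}|\log(\cdots)|^{-\sigma\theta}$, so after relabelling $\sigma\theta\rightsquigarrow\sigma$ we obtain the first line of \eqref{eq:prop_small_norms}.

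For the second move: set $v:=u_1-u_2$ and note $((-\D)^s+q_1)v=(q_2-q_1)u_2$ in $\Omega$, i.e. $(-\D)^s v=(q_2-q_1)u_2-q_1 v$ in $\Omega$ (and $v=0$ in $\Omega_e$, $(-\D)^s v=(\Lambda_1-\Lambda_2)f$ in $W$). In the Caffarelli–Silvestre picture, $\lim_{x_{n+1}\to0}x_{n+1}^{1-2s}\p_{n+1}\tilde v=c_s^{-1}(-\D)^s v$ weakly, so it suffices to bound $\|(-\D)^s v\|_{H^{-s}(\Omega)}$. Writing $\|(-\D)^s v\|_{H^{-s}(\Omega)}\le \|(q_2-q_1)u_2\|_{H^{-s}(\Omega)}+\|q_1 v\|_{H^{-s}(\Omega)}$, the second term is controlled by $\|q_1v\|_{H^{-s}(\Omega)}\le C\|q_1v\|_{L^2(\Omega)}\le CM\|v\|_{L^2(\Omega)}\le CM\|v\|_{H^s_{\overline\Omega}}$, which is exactly the quantity just estimated. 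For the first term one does \emph{not} want it small — but here one simply uses $\|(q_2-q_1)u_2\|_{H^{-s}(\Omega)}\le C\|(q_2-q_1)u_2\|_{L^2(\Omega)}\le 2M\|u_2\|_{L^2(\Omega)}\le C(M)\|f\|_{H^{s+\epsilon}(W)}$; wait — this is not small, so one cannot simply add it. Instead, the correct reading is that one should estimate $(-\D)^s v$ directly through the weak formulation of \eqref{eq:CS_ext} for $\tilde v$: pairing $x_{n+1}^{1-2s}\p_{n+1}\tilde v$ against test functions in $H^s(\Omega)$ and integrating by parts in $\R^{n+1}_+$ reduces the $H^{-s}(\Omega)$ norm of the Neumann trace to $\|x_{n+1}^{(1-2s)/2}\nabla\tilde v\|_{L^2(K)}$ over a half-ball $K$ above $\Omega$, plus the contribution from $W$. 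One then invokes a Caccioppoli/energy estimate for the degenerate equation localised near $\Omega\times\{0\}$ to dominate this gradient norm by $\|x_{n+1}^{(1-2s)/2}\tilde v\|_{L^2(2K)}$, and finally a trace-type inequality (together with \eqref{eq:apriori_L2}-style Poincaré bounds) relates the latter back to $\|v\|_{H^s_{\overline\Omega}}$ plus the data on $W$ — both already logarithmically small. Carrying the bookkeeping so that the non-small pieces cancel is precisely the delicate point.

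Thus the main obstacle is the second line: one must convert $H^s$-smallness of the Dirichlet trace $v|_\Omega$ into $H^{-s}$-smallness of the degenerate Neumann trace without picking up the (not-small) term $(q_2-q_1)u_2$. The clean way is to observe that $(\Lambda_1-\Lambda_2)f$, extended by the weak equation, satisfies $\lim_{x_{n+1}\to0}x_{n+1}^{1-2s}\p_{n+1}\tilde v = c_s(-\D)^s v$ globally on $\R^n$, and that on $\R^n\setminus W$ the function $v$ solves a genuine equation; so testing the global bilinear form $\int_{\R^{n+1}_+}x_{n+1}^{1-2s}\nabla\tilde v\cdot\nabla\phi\,dx$ against $\phi\in\widetilde H^s(\R^n)$ supported in a neighbourhood of $\overline\Omega$ (disjoint from $\overline W$) gives $|\langle c_s^{-1}\text{Neumann trace},\phi\rangle|\le \|x_{n+1}^{(1-2s)/2}\nabla\tilde v\|_{L^2(\text{nbhd})}\|x_{n+1}^{(1-2s)/2}\nabla\tilde\phi\|_{L^2}$, and the gradient of $\tilde v$ on a compact neighbourhood of $\overline\Omega\times\{0\}$ is, by interior-up-to-the-boundary Caccioppoli for \eqref{eq:CS_ext} plus \eqref{eq:apriori_L2}, controlled by $\|\tilde v\|_{L^2}$ on a slightly larger set, hence by $\|v\|_{H^s_{\overline\Omega}}+\|(\Lambda_1-\Lambda_2)f\|_{H^{-s}(W)}$ up to harmless constants — both of which the first line has just shown to be $\le C(M)\|f\|_{H^{s+\epsilon}(W)}|\log(\cdots)|^{-\sigma}$. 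This yields the second estimate of \eqref{eq:prop_small_norms} with the same $\sigma$ (up to relabelling), completing the proof.
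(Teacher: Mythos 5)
For the first line of \eqref{eq:prop_small_norms} your argument is essentially the paper's: you interpolate the $H^{s'}_{\overline\Omega}$ bound from Corollary \ref{cor:stabilityu} against an a priori $H^{s+\epsilon}_{\overline\Omega}$ bound (the paper secures the latter via Vishik--Eskin estimates from \cite{VE65}, which need only $q\in L^\infty$ for $\epsilon$ small enough, so your caveat about the $C^{0,s}$ hypothesis is well placed, even if you left the regularity estimate itself unproved). The paper splits symmetrically between $H^{s-\epsilon}$ and $H^{s+\epsilon}$ rather than between $H^{s'}$ and $H^{s+\epsilon}$, but this is cosmetic.

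For the second line you miss the short argument and talk yourself into an unnecessary and partly incorrect detour. There is no need to use the equation at all. Set $v:=u_1-u_2$; then $v\in H^s_{\overline\Omega}$ is compactly supported, so $(-\D)^s v$ is defined globally as a Fourier multiplier and one has directly
\begin{align*}
\big\|\lim_{x_{n+1}\to 0}x_{n+1}^{1-2s}\p_{n+1}\tilde u_1-\lim_{x_{n+1}\to 0}x_{n+1}^{1-2s}\p_{n+1}\tilde u_2\big\|_{H^{-s}(\Omega)}
= c_s\|(-\D)^s v\|_{H^{-s}(\Omega)}
\le c_s\|(-\D)^s v\|_{H^{-s}(\R^n)}
\le c_s\|v\|_{H^s(\R^n)}
= c_s\|v\|_{H^s_{\overline\Omega}},
\end{align*}
the middle inequality being the trivial symbol bound $|\xi|^{2s}(1+|\xi|^2)^{-s}\le (1+|\xi|^2)^{s}$. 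The second estimate is thus an immediate corollary of the first, with no reference to $q_j$, $u_2$, or the equation. Your initial attempt via $(-\D)^s v=(q_2-q_1)u_2-q_1 v$ and the triangle inequality is indeed a dead end, and you correctly recognise the offending term $(q_2-q_1)u_2$ cannot be made small; but the fix is to not use the equation at all, not to escalate to a bilinear-form localisation. The bilinear-form route you sketch has a genuine gap: when pairing the Neumann trace against $\phi\in\widetilde H^s(\Omega)$, the identity $\int_{\R^{n+1}_+}x_{n+1}^{1-2s}\nabla\tilde v\cdot\nabla\tilde\phi$ runs over the entire half-space, and the Caffarelli--Silvestre extension $\tilde\phi$ of a compactly supported $\phi$ is not itself compactly supported, so the estimate does not localise to a neighbourhood of $\overline\Omega$ without introducing cutoffs and commutator/error terms that you never control. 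That machinery is repairable in principle (it would amount to re-deriving the $H^s\to H^{-s}$ boundedness of $(-\D)^s$ by hand), but as written it does not close, and it obscures what is in the end a one-line observation.
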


\begin{proof}
Since $f\in H^{s+\epsilon}_{\overline{W}}$ for some $\epsilon>0$, we may without loss of generality assume that $\epsilon \in (0,\min\{\frac{s}{2},\frac{1}{2}\})$ and apply Vishik-Eskin estimates \cite{VE65}  (see also \cite[Lemma 6.2]{RS20a}). These entail that
\begin{align*}
\|u_j \|_{H^{s+\epsilon}_{\overline{\Omega}}}
\leq C \|f\|_{H^{s+\epsilon}(W)}, \ j \in \{1,2\},
\end{align*}
where $C=C(n,\Omega, W, \|q\|_{L^{\infty}(\Omega)},s,\epsilon)>0$.

In order to derive the first estimate in \eqref{eq:prop_small_norms}, we next use interpolation together with the estimate from Corollary \ref{cor:stabilityu}:
\begin{align*}
\|u_1 -u_2\|_{H^{s}_{\overline{\Omega}}} 
&\leq C\|u_1 -u_2\|_{H^{s+\epsilon}_{\overline{\Omega}}}^{\frac{1}{2}} \|u_1 - u_2\|_{H^{s-\epsilon}_{\overline{\Omega}}}^{\frac{1}{2}}\\
& \leq C \|f\|_{H^{s+\epsilon}(W)}^{\frac{1}{2}} \|u_1 - u_2\|_{H^{s-\epsilon}_{\overline{\Omega}}}^{\frac{1}{2}}
\leq \frac{C(M) \|f\|_{H^{s+\epsilon}(W)}}{\left|\log\left(\frac{C(M)\|f\|_{H^{s+\epsilon}(W)}}{\|(\Lambda_{1}-\Lambda_2)f\|_{H^{-s}(W)}} \right)\right|^{\tilde{\sigma}}}.
\end{align*}
Here $\tilde{\sigma}=  \frac{\sigma}{2}$, where $\sigma>0$ is the constant from Corollary \ref{cor:stabilityu}.

Regarding the second estimate in \eqref{eq:prop_small_norms}, we note that by virtue of the Caffarelli-Silvestre extension and the compact support of $u_1-u_2$ we obtain
\begin{align*}
\|\lim\limits_{x_{n+1}\rightarrow 0} x_{n+1}^{1-2s} \p_{n+1}\tilde{u}_1- \lim\limits_{x_{n+1}\rightarrow 0} x_{n+1}^{1-2s} \p_{n+1} \tilde{u}_2 \|_{H^{-s}(\Omega)}
& = c_s \|(-\D)^s u_1 - (-\D)^s u_2\|_{H^{-s}(\Omega)}\\
& \leq c_s \| u_1 - u_2 \|_{H^{s}_{\overline{\Omega}}}.
\end{align*}
Now, using the estimate from the first part of \eqref{eq:prop_small_norms} also implies the second bound stated in \eqref{eq:prop_small_norms}.
\end{proof}

\section{Doubling Estimates}
\label{sec:doubling}
Let us now turn to Step (ii) in the programme layed out in the introduction.
Here the main novelty is the discussion of the \emph{boundary doubling estimates} (see also \cite{BL15, S10, ASV13} for related results for Steklov-type operators). Related estimates for fractional Schrödinger operators had earlier been derived in \cite{Rue17} but on compact manifolds. After deducing an auxiliary Caccioppoli estimate in Section \ref{sec:Cacc}, in order to infer the doubling estimates, we argue in two steps: First, we produce \emph{bulk} doubling estimates in Section \ref{sec:bulkdoubl}, then in a second step, in Section \ref{sec:boundarydoubl}, we derive \emph{boundary} doubling estimates from these. The main point in the estimates is that the exponent $\beta$ in (ii) is not allowed to depend on $u|_{\Omega}$ directly, but only on the boundary data $f$ (it will in fact depend on the oscillation of the boundary data) and the other a priori data.

\subsection{Caccioppoli inequalities}
\label{sec:Cacc}

Since we will often use this in our proof of the doubling estimates, we recall the Caccioppoli estimate from \cite[Lemma 5.1]{RW19}.

\begin{prop}
\label{prop:Cacc}
Let $\Omega, W \subset \R^n$ with $n\geq 1$ be bounded, open, non-empty Lipschitz domains with $\overline{\Omega} \cap \overline{W}= \emptyset$. Let $s\in (0,1)$ and let $u \in H^s(\R^n)$ be a solution to \eqref{eq:nonlocal}. Let $\tilde{u}$ denote its Caffarelli-Silvestre extension satisfying \eqref{eq:CS_ext}. Assume that $x_0 \in \Omega$ and that $r \in (0,r_0)$ where $r_0>0$ is such that $\dist(x_0, \partial \Omega) \geq 4 r_0>0$. Then, there exists a constant $C>0$ depending on $n,s,\Omega,r_0$ such that for all $r\in (0,r_0)$
\begin{align*}
\|x_{n+1}^{\frac{1-2s}{2}} \nabla \tilde{u}\|_{L^2(B_{r}^+(x_0))}
\leq C (1 + \|q\|_{L^{\infty}(\Omega)}^{\frac{1}{2s}})r^{-1}\|x_{n+1}^{\frac{1-2s}{2}} \tilde{u}\|_{L^2(B_{2 r}^+(x_0))}.
\end{align*}
\end{prop}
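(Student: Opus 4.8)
The plan is to prove the Caccioppoli inequality by the standard energy-method argument, testing the weak formulation of \eqref{eq:CS_ext} against $\eta^2 \tilde u$ for a suitable cutoff $\eta$, but being careful to handle the extra zeroth-order term coming from the potential $q$ (which is encoded in the boundary condition $c_s \lim_{x_{n+1}\to 0} x_{n+1}^{1-2s}\p_{n+1}\tilde u = -q u$ on $\Omega$) rather than as a bulk term, since $\tilde u$ solves the \emph{homogeneous} degenerate equation in $\R^{n+1}_+$.

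\medskip

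First I would fix the cutoff: let $\eta \in C_c^\infty(B_{2r}^+(x_0))$ with $\eta \equiv 1$ on $B_r^+(x_0)$, $0\le \eta \le 1$, $|\nabla \eta| \le C r^{-1}$, and (this matters) chosen so that $\eta$ is even in $x_{n+1}$, i.e. a function of $x'$ and $x_{n+1}$ reflected, so that the normal derivative of $\eta$ vanishes on $\{x_{n+1}=0\}$. Since $\dist(x_0,\p\Omega)\ge 4r_0$ and $r<r_0$, the trace of $B_{2r}^+(x_0)$ on $\R^n$ lies well inside $\Omega$, so $\tilde u|_{\R^n} = u$ there and $u$ satisfies $(-\D)^s u = -qu$. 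Testing the equation $\nabla\cdot x_{n+1}^{1-2s}\nabla \tilde u = 0$ against $\eta^2 \tilde u$ and integrating by parts in $\R^{n+1}_+$ produces
\begin{align*}
\int_{\R^{n+1}_+} x_{n+1}^{1-2s} |\nabla \tilde u|^2 \eta^2 \, dx
= - 2\int_{\R^{n+1}_+} x_{n+1}^{1-2s} \eta\, \tilde u\, \nabla \eta \cdot \nabla \tilde u \, dx
- c_s^{-1}\int_{\R^n} \eta^2 u\, \big(q u\big)\, dx',
\end{align*}
where the boundary term on $\{x_{n+1}=0\}$ is where the factor $\lim x_{n+1}^{1-2s}\p_{n+1}\tilde u$ appears, and we substitute $(-\D)^s u = -qu$.

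\medskip

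Next I would estimate the two terms on the right. The gradient-crossing term is handled by Cauchy--Schwarz and Young's inequality in the usual way, absorbing $\tfrac12\int x_{n+1}^{1-2s}|\nabla\tilde u|^2\eta^2$ into the left side and leaving $C r^{-2}\int_{B_{2r}^+(x_0)} x_{n+1}^{1-2s}|\tilde u|^2$. The boundary term $\int_{\R^n}\eta^2 q u^2\,dx'$ is the point where the degeneracy forces extra care: one bounds it by $\|q\|_{L^\infty(\Omega)}\int_{B_{2r}'(x_0)} \eta^2 u^2\, dx'$ and then uses a trace/interpolation inequality for the weighted space — the standard trace estimate $\|w\|_{L^2(\R^n)}^2 \le C\big(\rho^{2s}\|x_{n+1}^{\frac{1-2s}{2}}\nabla w\|_{L^2}^2 + \rho^{2s-2}\|x_{n+1}^{\frac{1-2s}{2}} w\|_{L^2}^2\big)$ on a half-ball of radius $\rho$, applied with $w=\eta \tilde u$ and $\rho$ comparable to $r$ — to convert it into a small multiple of $\int x_{n+1}^{1-2s}|\nabla(\eta\tilde u)|^2$ plus a bulk term. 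Choosing $\rho = \kappa (1+\|q\|_{L^\infty})^{-1/(2s)}$ with $\kappa$ a small dimensional constant makes the prefactor of $\|q\|_{L^\infty}\rho^{2s}$ small enough to absorb, and this is exactly what produces the $(1+\|q\|_{L^\infty(\Omega)}^{1/(2s)})$ factor in the claimed bound; one should also note $\rho \lesssim r$ can be arranged (or handle the case $\rho>r$ trivially), and expand $\nabla(\eta\tilde u)$ to recover the $r^{-1}\|x_{n+1}^{\frac{1-2s}{2}}\tilde u\|_{L^2(B_{2r}^+)}$ term.

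\medskip

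The main obstacle, and the only genuinely non-routine point, is precisely this last absorption: unlike in the uniformly elliptic case, the zeroth-order contribution of $q$ lives on the lower-dimensional boundary and must be controlled through a \emph{weighted} trace inequality, and one has to track how the weight $x_{n+1}^{1-2s}$ and the radius interact so that the resulting constant depends on $\|q\|_{L^\infty(\Omega)}$ only through the scale-invariant combination $\|q\|_{L^\infty(\Omega)}^{1/(2s)}r$ (equivalently, the stated $(1+\|q\|^{1/(2s)})r^{-1}$). After the absorption, collecting the bulk terms on the right and taking square roots yields the estimate; the restriction $\dist(x_0,\p\Omega)\ge 4r_0$ (rather than $2r_0$) guarantees $B_{2r}^+(x_0)$ stays away from $\p\Omega$ so that $q$ is only evaluated where it is defined and the a priori regularity of $u$ and its extension applies on $B_{2r}^+(x_0)$.
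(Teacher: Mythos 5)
Your proof is correct, and the route genuinely differs from the paper's in how the $q$-dependence is absorbed. The paper argues in two stages: in Stage~1 it proves the Caccioppoli estimate only under a smallness assumption $\|q\|_{L^\infty}\le\delta_0$, using the fixed-scale trace bound $\|\eta u\|_{L^2(B_1')}\le C\|x_{n+1}^{(1-2s)/2}\nabla(\eta\tilde u)\|_{L^2(B_1^+)}$ and absorbing by smallness; in Stage~2 it rescales by $\delta\le(\delta_0/\|q\|_{L^\infty})^{1/(2s)}$ so that the rescaled potential $q_\delta(x)=\delta^{2s}q(\delta x+x_0)$ becomes small, applies Stage~1 on balls of radius $\sim\delta$, and recovers the estimate at scale $r$ by a covering argument together with an interior Caccioppoli bound above the boundary layer. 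You instead work directly at scale $r$: you feed a free parameter $\rho$ into the weighted interpolation trace inequality $\|w\|_{L^2(\R^n)}^2\le C\bigl(\rho^{2s}\|x_{n+1}^{(1-2s)/2}\nabla w\|_{L^2}^2+\rho^{2s-2}\|x_{n+1}^{(1-2s)/2}w\|_{L^2}^2\bigr)$ and tune $\rho\sim\min\{r,(1+\|q\|_{L^\infty})^{-1/(2s)}\}$ so that $\|q\|_{L^\infty}\rho^{2s}$ is small enough to absorb, obtaining the estimate in one shot with no covering. The two tunings encode the same scaling observation (both pick the scale at which $q$ becomes a small perturbation: $\delta^{2s}\|q\|_{L^\infty}\sim\rho^{2s}\|q\|_{L^\infty}\lesssim 1$), and both yield the $(1+\|q\|_{L^\infty}^{1/(2s)})r^{-1}$ prefactor; your version eliminates the covering bookkeeping, while the paper's Stage~1 has the virtue of being the literal small-perturbation Caccioppoli argument with no parameter to track. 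A cosmetic remark: the sign of the boundary term $c_s^{-1}\int\eta^2 q u^2$ depends on the orientation and the $c_s$ convention, but only its modulus enters the estimate, so this is harmless.
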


For the convenience of the reader we recall the proof of this result. It relies on a combination of trace estimates and the equation for $u$.

\begin{proof}
We first note that by scaling and translating, we may assume that $r=1$ and that $x_0 = 0$.\\ 

\emph{Step 1: Proof of the result in the case that $\|q\|_{L^{\infty}(\R^n)}$ is sufficiently small.}
As in \cite{RW19} we first prove the result if $\|q\|_{L^{\infty}(\R^n)} \leq \delta$ for some $\delta \in (0,\delta_0)$ sufficiently small. Testing the bulk Caffarelli-Silvestre equation with $\eta^2 \tilde{u}$ where $\eta$ is a cut-off function with $\supp(\eta) \subset B_1^+$ we infer that
\begin{align}
\label{eq:Cacc_1}
\|x_{n+1}^{\frac{1-2s}{2}} \eta \nabla \tilde{u}\|_{L^2(B_1^+)}
\leq C_1 (\|x_{n+1}^{\frac{1-2s}{2}} |\nabla \eta| \tilde{u} \|_{L^2(B_1^+)} + \| q \|_{L^{\infty}(\R^n)}^{\frac{1}{2}}\|\eta u\|_{L^2(B_1')}).
\end{align}
By trace estimates and the compact support of $\eta$ we have that 
\begin{align*}
\|\eta u\|_{L^2(B_1')} \leq C \|x_{n+1}^{\frac{1-2s}{2}} \nabla (\eta \tilde{u})\|_{L^2(B_1^+)} \leq C_2 (\|x_{n+1}^{\frac{1-2s}{2}} \eta (\nabla \tilde{u})\|_{L^2(B_1^+)} + \|x_{n+1}^{\frac{1-2s}{2}} (\nabla \eta) \tilde{u}\|_{L^2(B_1^+)}  ).
\end{align*}
Inserting this back into \eqref{eq:Cacc_1} and using the smallness of $\|q\|_{L^{\infty}(\R^n)}\leq \delta$, allows us to conclude the desired estimate
\begin{align}
\label{eq:Cacc_norm}
\|x_{n+1}^{\frac{1-2s}{2}} \nabla \tilde{u}\|_{L^2(B_{2/3}^+)}
\leq C \|x_{n+1}^{\frac{1-2s}{2}} \tilde{u} \|_{L^2(B_1^+)}.
\end{align}
if $\delta_0^{\frac{1}{2}} \leq \frac{1}{2 C_1 C_2}$.\\

\emph{Step 2: Rescaling, reduction to Step 1.} Since the problem is subcritical, it is possible to rescale the problem so that we may apply Step 1. More precisely, the function $\tilde{u}_{\delta,x_0}:=\tilde{u}(x_0 + \delta x)$ for $x_0 \in \R^n \times \{0\}$ arbitrary solves an equation of the type \eqref{eq:CS_ext} but with $q$ replaced by $q_{\delta}(x):= \delta^{2s} q(\delta x + x_0)$. As a consequence, for $\delta \leq \left(\frac{\delta_0}{\|q\|_{L^{\infty}(\R^n)}} \right)^{\frac{1}{2s}} $, we in particular obtain that $\|q_{\delta}\|_{L^{\infty}(\R^n)} \leq \delta_0$ with $\delta_0>0$ as in Step 1 of the proof. Thus, \eqref{eq:Cacc_norm} is applicable to $u_{\delta,x_0}$. After rescaling this implies that
\begin{align*}
\|x_{n+1}^{\frac{1-2s}{2}} \nabla \tilde{u}\|_{L^2(B_{2\delta/3}^+(x_0))} \leq C \left( \frac{\|q\|_{L^{\infty}(\R^n)}}{\delta_0} \right)^{\frac{1}{2s}} \|x_{n+1}^{\frac{1-2s}{2}} \tilde{u}\|_{L^2(B_{\delta}^+(x_0))}.
\end{align*} 
Covering $B_{1}'(x_0)$ by balls of radius $2\delta/3$ which have a controlled (dimension-dependent) overlap then implies that
\begin{align*}
\|x_{n+1}^{\frac{1-2s}{2}} \nabla \tilde{u}\|_{L^2(B_{1}'(x_0)\times [0,\delta/2))} \leq C \left( \frac{\|q\|_{L^{\infty}(\R^n)}}{\delta_0} \right)^{\frac{1}{2s}} \|x_{n+1}^{\frac{1-2s}{2}} \tilde{u}\|_{L^2(B_{2}'(x_0)\times [0,\delta))}.
\end{align*} 
Finally in $B_1'(x_0)\times (\delta/2,1]$ we apply a further (controlled) covering argument and Caccioppoli's inequality without boundary contributions. Combining all of this yields the desired estimate.
\end{proof}

\subsection{The bulk doubling inequality}
\label{sec:bulkdoubl}
In this section, we begin our derivation of the central doubling estimates. To this end, we first prove a bulk doubling estimate.

\begin{thm}
\label{thm:doubl_bulk}
Let $n\geq 1$ and $s \in (0,1)$. Let $\Omega, W \subset \R^n$ be bounded, open, non-empty Lipschitz domains with $\overline{\Omega} \cap \overline{W}= \emptyset$. Then, there exists a constant $C>1$ depending only on $\Omega,n , \frac{\|f\|_{H^s(W)}}{\|f\|_{L^2(W)}}, \|q\|_{L^{\infty}(\Omega)}, s, W$ such that for any $x_0 \in \Omega$, $r \in (0,r_0)$ with $r_0 = \frac{\dist(x_0,\partial \Omega)}{10}$ and for any $\tilde{u}$ solving \eqref{eq:CS_ext} with data $f \in \widetilde{H}^{s}(W)\setminus \{0\}$ it holds
\begin{align*}
\|x_{n+1}^{\frac{1-2s}{2}} \tilde{u}\|_{L^2(B_{2r}^+(x_0))} \leq C \|x_{n+1}^{\frac{1-2s}{2}} \tilde{u}\|_{L^2(B_r^+(x_0))}.
\end{align*}
\end{thm}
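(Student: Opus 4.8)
The plan is to prove the bulk doubling estimate by the classical Almgren/Garofalo–Lin frequency function method, adapted to the degenerate weight $x_{n+1}^{1-2s}$ following Caffarelli–Silvestre and the earlier work in \cite{Rue17}. Fix $x_0 \in \Omega$ with $\dist(x_0,\partial\Omega) \geq 10 r_0$; by translation assume $x_0 = 0$ and write $\mu := x_{n+1}^{1-2s}$. For $r \in (0, 2r_0)$ set
\begin{align*}
H(r) := \int_{\partial B_r^+} \mu\, \tilde u^2 \, d\sigma, \qquad
D(r) := \int_{B_r^+} \mu\, |\nabla \tilde u|^2 \, dx + \int_{B_r'} q\, \tilde u^2 \, dx',
\end{align*}
and define the generalised frequency $N(r) := r D(r) / H(r)$. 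First I would record the first-variation (Rellich–Pohozaev) identities for $H$ and $D$: differentiating $H$ and using the equation \eqref{eq:CS_ext} one gets $H'(r) = \frac{n-2s}{r} H(r) + 2 D(r)$ (this uses that $\mu$ is homogeneous of degree $1-2s$ and that the boundary term on $B_r'$ vanishes because the weighted conormal derivative is $c_s^{-1}(-\D)^s u = -c_s^{-1} q u$, which is exactly what produces the $q\tilde u^2$ term in $D$), while $D'(r)$ is controlled by a boundary Dirichlet integral plus lower-order terms coming from $q$. The point of the $C^{0,s}$ (here only $L^\infty$) hypothesis together with subcriticality is that these $q$-contributions are perturbative after rescaling to unit scale, so one obtains the almost-monotonicity inequality $\frac{d}{dr}\log N(r) \geq -C(1 + \|q\|_{L^\infty(\Omega)}^{1/2s})$ on $(0, 2r_0)$, hence $e^{Cr} N(r)$ is monotone and $N(r) \leq N(2r_0) + C r_0 =: \Lambda$ for all $r \leq 2r_0$.

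Next I would convert the frequency bound into the doubling statement. From $H'(r)/H(r) = \frac{n-2s}{r} + \frac{2 N(r)}{r}$ and $0 \leq N(r) \leq \Lambda$ on $(0,2r_0)$, integrating from $r$ to $2r$ gives $H(2r) \leq 2^{\,n-2s+2\Lambda}\, e^{C r_0} H(r)$, i.e. a surface doubling inequality. Integrating in the radial variable (and using $H(r) \gtrsim H(r/2)$ iterated, which is again controlled by $\Lambda$) upgrades this to the solid doubling inequality
\begin{align*}
\|\mu^{1/2} \tilde u\|_{L^2(B_{2r}^+)}^2 = \int_0^{2r} H(\rho)\, d\rho \leq C(\Lambda, r_0)\, \int_0^{r} H(\rho)\, d\rho = C(\Lambda, r_0)\, \|\mu^{1/2}\tilde u\|_{L^2(B_r^+)}^2,
\end{align*}
which is the claimed estimate once $C$ is taken to be the square root.

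The main obstacle — and the reason the statement is nontrivial and the constant must \emph{not} depend on $u|_\Omega$ — is producing a bound on the frequency $N(2r_0)$ at the fixed scale $2r_0$ that depends only on the allowed data $\Omega, W, n, s, \|q\|_{L^\infty(\Omega)}$ and the oscillation quotient $\|f\|_{H^s(W)}/\|f\|_{L^2(W)}$. This requires two ingredients. First, an upper bound on $D(2r_0)$: by the a priori estimates \eqref{eq:aux_3} and \eqref{eq:apriori_L2}, together with the Caccioppoli inequality of Proposition \ref{prop:Cacc}, one controls $\int_{B_{2r_0}^+} \mu |\nabla \tilde u|^2$ by $C\|f\|_{H^s(W)}^2$, and the $q$-term by $C\|q\|_{L^\infty}\|u\|_{L^2(\Omega)}^2 \leq C\|f\|_{H^s(W)}^2$. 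Second, and this is the delicate part, a \emph{lower} bound $H(2r_0) \geq c\, \|f\|_{L^2(W)}^2$ (equivalently a nondegeneracy statement for $\tilde u$ near the fixed scale), which cannot follow from energy estimates alone: here one invokes a quantitative unique continuation / three-balls argument propagating the nonvanishing of $f$ on $W$ (captured through $\|f\|_{L^2(W)}>0$) across $\R^{n+1}_+$ into $B_{2r_0}^+(x_0)$, in the spirit of \cite{RS20a, Rue17}, to conclude $\|\mu^{1/2}\tilde u\|_{L^2(B_{2r_0}^+)} \geq c(\Omega, W, n, s, \|q\|_{L^\infty})\, \|f\|_{L^2(W)}$, whence $N(2r_0) \leq C(1 + \|f\|_{H^s(W)}/\|f\|_{L^2(W)})^2$ up to the stated dependencies. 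Combining this $\Lambda$-bound with the almost-monotonicity and the integration steps above yields the theorem.
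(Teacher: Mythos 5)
Your proposal takes a genuinely different route from the paper. The paper establishes the bulk doubling estimate via the Carleman inequality from \cite[Appendix A]{GRSU18} (with weight $\phi(x)=\psi(|x|)$, applied to a radial cutoff $\tilde u\eta_r$), then optimises in the Carleman parameter $\tau$ after absorbing the boundary $q$-term; the resulting three-ball-type estimate is converted into a doubling inequality by bounding the quotient $\|\mu^{1/2}\tilde u\|_{L^2(B_5^+)}/\|\mu^{1/2}\tilde u\|_{L^2(B_{5/2}^+\setminus B_2^+)}$ by a power of $F=\|f\|_{H^s(W)}/\|f\|_{L^2(W)}$ through Lemma~\ref{lem:annulus}. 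You instead propose the Almgren--Garofalo--Lin frequency function for the $A_2$-weight $x_{n+1}^{1-2s}$ with the boundary coupling built into $D(r)$, prove almost-monotonicity of $N(r)$, and bound $N$ at the fixed scale $2r_0$ by propagation of smallness. Both are classical routes to doubling; the key insight (that the ``initial'' doubling/frequency constant at a fixed scale must be controlled in terms of $F$ rather than $u|_\Omega$, and that this requires a quantitative continuation argument from $W$) is correctly identified in your sketch and is exactly what Lemma~\ref{lem:control_f} and Lemma~\ref{lem:annulus} provide in the paper. The Carleman route has the advantage that the needed estimate is already recorded in \cite{GRSU18} and is robust for $q\in L^\infty$; the frequency route is structurally cleaner (the doubling constant is explicit as $2^{n+1-2s+2\Lambda}$) but requires care with the boundary term, which I flag below.

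A few points that would need attention before your argument is rigorous. First, a small bookkeeping error: in $\R^{n+1}_+$ the spherical boundary $\partial B_r^+\cap\{x_{n+1}>0\}$ is $n$-dimensional, and with $\mu=x_{n+1}^{1-2s}$ homogeneous of degree $1-2s$ one obtains $H'(r)=\frac{n+1-2s}{r}H(r)+2\int_{\partial B_r^+\cap\{x_{n+1}>0\}}\mu\tilde u\,\p_\nu\tilde u\,d\sigma$, not $\frac{n-2s}{r}H(r)+2D(r)$; the exponent feeds directly into the final doubling constant. Second, and more substantively, $D(r)$ as you define it need not be nonnegative when $q$ changes sign, so $\log N(r)$ may be undefined and the ``almost-monotonicity'' statement must be reformulated (e.g.\ by adding a term $c\,r^{-2s}H(r)$ to the numerator, or by working with $N(r)+C$, and controlling the modification via the trace inequality as in Proposition~\ref{prop:Cacc}); this is standard but not automatic, and it is precisely where the $\|q\|_{L^\infty}^{1/(2s)}$ dependence must enter carefully through rescaling. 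Third, your claimed lower bound $H(2r_0)\gtrsim\|f\|_{L^2(W)}^2$ is not quite what falls out of the three-balls/chain argument: what one controls is a solid integral over an annulus or ball near scale $r_0$ (as in Lemma~\ref{lem:annulus}), and one must average over $\rho\in(r_0,2r_0)$ to extract a surface bound $H(\rho)$ for some $\rho$, then use the surface doubling backwards; this is doable but is an extra step you should make explicit.
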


In order to achieve this, we rely on the Carleman inequality from \cite[Appendix A]{GRSU18} and propagation of smallness estimates. We will also rely on a number of auxiliary results which we present first.

\subsubsection{Auxiliary results} 

We begin by discussing bounds showing the ``persistence of largeness'' which are used in our propagation of smallness estimates:

\begin{lem}
\label{lem:control_f}
Let $n\geq 1$ and $s\in (0,1)$. Let $W, \Omega \subset \R^n$ be open, bounded, non-empty Lipschitz sets with $\overline{\Omega}\cap \overline{W} = \emptyset$. Let $\tilde{u}$ be a solution to \eqref{eq:CS_ext} with $f\neq 0$. Then there exists a constant $C>1$ depending on $\Omega, W, n,s$ such that for any $h\in (0,1)$
\begin{align*}
\|x_{n+1}^{\frac{1-2s}{2}} \tilde{u}\|_{L^2(W \times [h,1])} \geq  \left( \frac{1}{C} F^{-\frac{1}{s}} - h \right) \|f\|_{H^s(W)} - C_s h^{1-s} \|f\|_{L^2(W)},
\end{align*}
where, as above, $F= \frac{\|f\|_{H^s(W)}}{\|f\|_{L^2(W)}}$.
\end{lem}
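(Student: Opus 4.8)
The statement quantifies a "persistence of largeness" phenomenon: the trace of $\tilde u$ on $W\times\{0\}$ equals $f$, which has a definite size $\|f\|_{H^s(W)}$, and we want to propagate a lower bound on $\|x_{n+1}^{\frac{1-2s}{2}}\tilde u\|_{L^2(W\times[h,1])}$. The natural route is the fundamental theorem of calculus in the normal variable. For $x'\in W$ and $t\in(0,1)$ write
\begin{align*}
\tilde u(x',t) = \tilde u(x',0) + \int_0^t \partial_{n+1}\tilde u(x',\tau)\,d\tau = f(x') + \int_0^t \partial_{n+1}\tilde u(x',\tau)\,d\tau,
\end{align*}
so that, after multiplying by the weight and integrating over $x'\in W$ and $t\in[h,1]$, the contribution of the first term is $\big(\int_h^1 t^{1-2s}\,dt\big)^{1/2}\|f\|_{L^2(W)}$ modulo the error made in controlling $\int_0^t\partial_{n+1}\tilde u\,d\tau$. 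The first plan is therefore: (1) lower-bound $\|x_{n+1}^{\frac{1-2s}{2}}\tilde u\|_{L^2(W\times[h,1])}$ by $\|x_{n+1}^{\frac{1-2s}{2}}f\|_{L^2(W\times[h,1])} - \|x_{n+1}^{\frac{1-2s}{2}}\big(\tilde u(\cdot,\cdot)-f\big)\|_{L^2(W\times[h,1])}$ by the reverse triangle inequality; (2) evaluate $\|x_{n+1}^{\frac{1-2s}{2}}f\|_{L^2(W\times[h,1])}^2 = \|f\|_{L^2(W)}^2\int_h^1 t^{1-2s}dt$, which is comparable to $\|f\|_{L^2(W)}^2$ up to a constant $C_s$ (the integral $\int_0^1 t^{1-2s}dt = \frac{1}{2-2s}$ is finite since $s<1$), and by the definition $F=\|f\|_{H^s(W)}/\|f\|_{L^2(W)}$ this equals $c_s F^{-2}\|f\|_{H^s(W)}^2 - (\text{tail up to }h)$; (3) bound the error term $\|x_{n+1}^{\frac{1-2s}{2}}\int_0^t\partial_{n+1}\tilde u\,d\tau\|_{L^2(W\times[h,1])}$ by a Hardy/Cauchy--Schwarz argument against $\|x_{n+1}^{\frac{1-2s}{2}}\nabla\tilde u\|_{L^2(\R^{n+1}_+)}$, and then by \eqref{eq:aux_3} bound the latter by $C\|f\|_{H^s(W)}$.

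\textbf{Details of the error estimate.} The key intermediate inequality is a weighted Cauchy--Schwarz bound of the form
\begin{align*}
\Big|\int_0^t \partial_{n+1}\tilde u(x',\tau)\,d\tau\Big| \le \Big(\int_0^t \tau^{-(1-2s)}\,d\tau\Big)^{1/2}\Big(\int_0^t \tau^{1-2s}|\partial_{n+1}\tilde u(x',\tau)|^2\,d\tau\Big)^{1/2},
\end{align*}
and here $\int_0^t\tau^{-(1-2s)}d\tau = \int_0^t\tau^{2s-1}d\tau = \frac{t^{2s}}{2s}$ is finite because $s>0$. Squaring, multiplying by $t^{1-2s}$, integrating $t$ over $[h,1]$ and $x'$ over $W$, one gets the error squared bounded by $C_s\big(\int_h^1 t^{1-2s}\cdot t^{2s}dt\big)\,\|x_{n+1}^{\frac{1-2s}{2}}\partial_{n+1}\tilde u\|_{L^2(W\times[0,1])}^2 \le C_s\|f\|_{H^s(W)}^2$; to recover the precise $h$-dependence claimed ($\approx h^{1-s}$ after taking square roots, i.e.\ an $h^{2-2s}$-type gain before the root — I would double check the exact power), one should be slightly more careful and split the $\tau$-integral at $h$, noting that the part $\int_0^h$ of the weight integral contributes an $h^{2s}$-type factor while the outer $t$-integral restricted to $[h,1]$ of $t^{1-2s}$ is $O(1)$; combining the two exponents and the fact that on $W\times[0,h]$ the energy is small should give the stated $C_s h^{1-s}\|f\|_{L^2(W)}$ term. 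Assembling (1)--(3): $\|x_{n+1}^{\frac{1-2s}{2}}\tilde u\|_{L^2(W\times[h,1])} \ge \big(c_s F^{-1} - C_s h\big)\|f\|_{H^s(W)} - C_s h^{1-s}\|f\|_{L^2(W)}$, which — absorbing $c_s$ into $1/C$ and noting $c_s F^{-1}\ge c_s F^{-1/s}$ when $F\ge 1$ and $s\le 1$ (so the exponent can be worsened to $1/s$) — yields the claimed bound.

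\textbf{Main obstacle.} The conceptual content is elementary — it is just the fundamental theorem of calculus plus weighted Cauchy--Schwarz plus the global energy bound \eqref{eq:aux_3} — so there is no deep difficulty. The delicate point is purely bookkeeping: getting the right powers of $h$ and of $F$, in particular justifying why the exponent $\tfrac1s$ on $F$ appears (I expect this comes from either allowing a lossy step $F^{-1}\ge F^{-1/s}$ when $F\ge1$, or from a more refined splitting where the critical scale in $x_{n+1}$ is $h\sim F^{-1/s}$, reflecting the scaling of the fractional operator), and making sure the two error terms are genuinely of lower order than the main term for $h$ small. A secondary subtlety is that $f$ is only in $\widetilde H^s(W)$, so $\tilde u(\cdot,t)\to f$ in $\widetilde H^s$ as $t\to0$ but the pointwise fundamental-theorem-of-calculus computation must be run on a dense class (e.g.\ $f\in C_c^\infty$) and then passed to the limit, using that all quantities appearing are continuous in the relevant norms; I would insert a one-line density remark rather than belabour it. One should also double-check that the constant $C$ is allowed to depend only on $\Omega,W,n,s$ as claimed, which it is, since \eqref{eq:aux_3} has exactly that dependence together with $\|q\|_{L^\infty}$ — here however the statement omits $\|q\|_{L^\infty(\Omega)}$ from the dependence list, which suggests that one should instead use \eqref{eq:apriori_grad}, $\|x_{n+1}^{\frac{1-2s}{2}}\nabla\tilde u\|_{L^2(\R^{n+1}_+)}\le C\|u\|_{H^s(\R^n)}$, combined with the fact that $\tilde u|_W=f$ lets one bound the relevant piece of $\|u\|_{H^s}$ near $W$ without invoking the equation in $\Omega$; I would verify this point carefully as it is the only place the dependence list could go wrong.
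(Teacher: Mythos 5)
There is a genuine gap in your approach, and it lies not in the bookkeeping you flag but in the overall structure. You propose to write $\tilde u(x',t) = f(x') + \int_0^t \partial_{n+1}\tilde u\,d\tau$, treat $\|x_{n+1}^{\frac{1-2s}{2}} f\|_{L^2(W\times[h,1])} \sim c_s\|f\|_{L^2(W)}$ as the main term, and bound the remainder by weighted Cauchy--Schwarz. But your own computation shows the remainder squared is controlled by $C_s\bigl(\int_h^1 t\,dt\bigr)\,\|x_{n+1}^{\frac{1-2s}{2}}\partial_{n+1}\tilde u\|_{L^2(W\times[0,1])}^2$, which does \emph{not} vanish as $h\to 0$: for $t$ of order $1$, the accumulated integral $\int_0^t\partial_{n+1}\tilde u\,d\tau$ is an $O(1)$ quantity in the gradient norm, of size roughly $\|f\|_{H^s(W)}$. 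Since $\|f\|_{H^s(W)} \ge \|f\|_{L^2(W)}$, the error overwhelms the proposed main term no matter how small $h$ is, and the lower bound you would assemble is vacuous. Splitting the $\tau$-integral at $h$ cannot fix this, because the relevant $\tau$-range for $t\in[h,1]$ is all of $[0,t]\supset[h,t]$ and the piece on $[h,t]$ is macroscopic; there is no reason for the gradient energy on $W\times[h,1]$ to be small.

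The paper sidesteps this entirely by using the fundamental theorem of calculus \emph{in the other direction} — to bound the mass on the thin slab $W\times[0,h]$ from above, where the $t$-integration indeed produces small powers of $h$ — while the lower bound is obtained by a completely separate mechanism. Concretely, the argument has two independent halves. First, a weighted trace inequality (from \cite[Lemma~2.5]{CR20}) of the form
\begin{align*}
\|f\|_{L^2(W)} \le C\bigl(\mu^{-s}\|x_{n+1}^{\frac{1-2s}{2}}\nabla\tilde u\|_{L^2(W\times[0,1])} + \mu^{1-s}\|x_{n+1}^{\frac{1-2s}{2}}\tilde u\|_{L^2(W\times[0,1])}\bigr)
\end{align*}
for all $\mu\ge\mu_0$ is optimized in $\mu$ (the lower bound $\mu_0$ on the admissible range is guaranteed by a Poincar\'e inequality using the zero Dirichlet data outside $\overline{W}\cup\overline{\Omega}$). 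Combined with the a priori bound $\|x_{n+1}^{\frac{1-2s}{2}}\nabla\tilde u\|\le C\|f\|_{H^s(W)}$, this yields $\|f\|_{L^2(W)} \le C\|f\|_{H^s(W)}^{1-s}\|x_{n+1}^{\frac{1-2s}{2}}\tilde u\|_{L^2(W\times[0,1])}^{s}$, i.e.\ the clean lower bound $F^{-1/s}\|f\|_{H^s(W)} \le C\|x_{n+1}^{\frac{1-2s}{2}}\tilde u\|_{L^2(W\times[0,1])}$ on the \emph{full} slab. Only then does the fundamental theorem of calculus enter, to establish $\|x_{n+1}^{\frac{1-2s}{2}}\tilde u\|_{L^2(W\times[0,h])} \le h\,\|x_{n+1}^{\frac{1-2s}{2}}\partial_{n+1}\tilde u\| + C_s h^{1-s}\|f\|_{L^2(W)}$, and the desired estimate follows by subtracting. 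This also answers your question about the exponent: $F^{-1/s}$ is not a lossy weakening of $F^{-1}$ — it is the exponent that the trace interpolation intrinsically produces, coming from the exponents $(1-s,s)$ in the trace inequality. Your proposed route that purportedly yields the stronger $F^{-1}$ fails precisely because the error in the FTC-from-the-boundary decomposition is not small, so the idea ``worsen $F^{-1}$ to $F^{-1/s}$'' never gets off the ground.

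Your secondary observation about the dependency list is a fair one: the step bounding $\|x_{n+1}^{\frac{1-2s}{2}}\nabla\tilde u\|$ by $C\|f\|_{H^s(W)}$ uses \eqref{eq:aux_3} and therefore does carry a dependence on $\|q\|_{L^\infty(\Omega)}$; the lemma's stated dependency list appears to omit this (or absorb it into the standing assumptions), whereas the surrounding results in the section do list $\|q\|_{L^\infty(\Omega)}$ explicitly.
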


\begin{proof}
The claim follows from trace estimates in a neighbourhood of $W$ and lower bounds on $\|f\|_{L^2(W)}$. 
More precisely, using the trace estimate from \cite[Lemma 2.5]{CR20}, we obtain that for all $\mu \geq \mu_0 \geq 1$ we have
\begin{align}
\label{eq:interpol_eq}
\|f\|_{L^2(W)} \leq C (\mu^{-s} \|x_{n+1}^{\frac{1-2s}{2}} \nabla \tilde{u}\|_{L^2(W \times [0,1])} + \mu^{1-s} \|x_{n+1}^{\frac{1-2s}{2}} \tilde{u}\|_{L^2(W \times [0,1])} ).
\end{align}
By Poincar\'e's inequality and the zero boundary conditions in $\R^n \setminus (\overline{W}\cup \overline{\Omega})$ (see the proof of \eqref{eq:apriori_L2}), there exists a constant $C>0$ (depending only on $\Omega, n,s,W$ and $\mu_0$) such that
\begin{align*}
C\frac{\|x_{n+1}^{\frac{1-2s}{2}} \nabla \tilde{u}\|_{L^2(2W \times [0,1])}}{\|x_{n+1}^{\frac{1-2s}{2}} \tilde{u}\|_{L^2(W \times [0,1])}}\geq \mu_0.
\end{align*}
Here, with slight abuse of notation, we have set 
\begin{align*}
2W:=\{x\in \R^n: \ \dist(x,W)\leq \frac{1}{2} \dist(W,\Omega)\}.
\end{align*}
Inserting this into \eqref{eq:interpol_eq} and using that $\|x_{n+1}^{\frac{1-2s}{2}} \nabla \tilde{u}\|_{L^2(\R^{n+1}_+)} \leq C \|f\|_{H^s(W)}$, we obtain
\begin{align*}
\|f\|_{L^2(W)} 
&\leq C \|x_{n+1}^{\frac{1-2s}{2}} \nabla \tilde{u} \|_{L^2(2W \times [0,1])}^{1-s} \|x_{n+1}^{\frac{1-2s}{2}} \tilde{u} \|_{L^2(W\times [0,1])}^s\\
& \leq C \|f\|_{H^s(W)}^{1-s} \|x_{n+1}^{\frac{1-2s}{2}} \tilde{u}\|_{L^2(W \times [0,1])}^{s}.
\end{align*}
Setting $F= \frac{\|f\|_{H^s(W)}}{\|f\|_{L^2(W)}}$, this is equivalent to
\begin{align}
\label{eq:low1}
F^{-\frac{1}{s}} \|f\|_{H^s(W)} \leq C \|x_{n+1}^{\frac{1-2s}{2}} \tilde{u} \|_{L^2(W \times [0,1])}.
\end{align}
This in particular implies a lower bound on $\|x_{n+1}^{\frac{1-2s}{2}} \tilde{u}\|_{L^2(W \times [0,1])}$ in terms of the measured data $f$ and the a priori control $F$, given in terms of the oscillation of the data $f$. 

We next observe that by an approximation argument and the fundamental theorem of calculus
\begin{align}
\label{eq:low2}
\int\limits_0^{h}\int\limits_{W} t^{1-2s}|\tilde{u}(x,t)|^2 dx dt 
&\leq h^{2} \int\limits_{0}^h \int\limits_{W} t^{1-2s} |\p_t \tilde{u}(x,t)|^2 dx dt + C_s h^{2-2s} \int\limits_{W} |\tilde{u}(x,0)|^2 dx.
\end{align}
Indeed, this follows by noting that for $C^1$ functions
\begin{align*}
|\tilde{u}(x,t)|^2 &\leq C \left( |\tilde{u}(x,t)|^2 + \left(\int\limits_{0}^h |\p_t \tilde{u}(x,\tau)|d\tau \right)^2 \right)\\
&\leq  C \left( |\tilde{u}(x,t)|^2 + C_s h^{2s}\int\limits_{0}^h \tau^{1-2s} |\p_t \tilde{u}(x,\tau)|^2d\tau \right).
\end{align*}
Now multipyling this with $t^{1-2s}$ for $t>0$ and integrating in $W\times (0,h)$ implies \eqref{eq:low2}.

Finally, invoking \eqref{eq:low2},
we infer that the mass in $W \times [h,1]$ is bounded below if $h$ is chosen properly depending on the mass of $f$.
Indeed, combining \eqref{eq:low1}, \eqref{eq:low2} and \eqref{eq:apriori_grad}, we obtain
\begin{align*}
\|x_{n+1}^{\frac{1-2s}{2}} \tilde{u} \|_{L^2(W \times [h,1])}
& \geq \frac{1}{2}\left( \|x_{n+1}^{\frac{1-2s}{2}} \tilde{u}\|_{L^2(W \times [0,1])} - \left\|x_{n+1}^{\frac{1-2s}{2}} \tilde{u} \right\|_{L^2(W \times [0,h])} \right)\\
&\geq \frac{1}{C} F^{- \frac{1}{s}} \|f\|_{H^s(W)} - h \|x_{n+1}^{\frac{1-2s}{2}}\p_{n+1} \tilde{u}\|_{L^2(\R^{n+1}_+)} - C_s h^{1-s} \|u\|_{L^2(W)}\\
& \geq \left( \frac{1}{C} F^{-\frac{1}{s}} - h \right) \|f\|_{H^s(W)} - C_s h^{1-s} \|f\|_{L^2(W)},
\end{align*}
which proves the desired result.
\end{proof}

Using this and propagation of smallness arguments, we control the quotients of mass in balls of uniform size. 

\begin{lem}[Annulus bounds]
\label{lem:annulus}
Let $n\geq 1$, $s\in (0,1)$ and let $R>0$ be a fixed radius. Let $\Omega, W \subset \R^n$ be bounded, open, non-empty Lipschitz domains with $\overline{\Omega} \cap \overline{W}= \emptyset$. Let $\tilde{u}$ be a solution to \eqref{eq:CS_ext} with data $f\in \widetilde{H}^s(W)\setminus \{0\}$. Then there exist constants $C>1$, $\gamma>1$ depending on $W,n,\Omega, R,s, \|q\|_{L^{\infty}(\Omega)}$ such that 
\begin{align*}
\frac{\|x_{n+1}^{\frac{1-2s}{2}}\tilde{u}\|_{L^2(B_{2R}^+)}}{\|x_{n+1}^{\frac{1-2s}{2}} \tilde{u}\|_{L^2(B_R^+ \setminus B_{R/2}^+)}} \leq C \left(\frac{\|f\|_{H^s(W)}}{\|f\|_{L^2(W)}} \right)^{\gamma}.
\end{align*}
\end{lem}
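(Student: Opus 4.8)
\textbf{Plan for the proof of Lemma \ref{lem:annulus}.} The strategy is a three-region argument: (1) control the mass of $\tilde u$ on $B_{2R}^+$ from above by its mass in the large-scale set $W\times [h,1]$, using three-ball/propagation-of-smallness inequalities derived from the Carleman estimate for the degenerate operator $\nabla\cdot x_{n+1}^{1-2s}\nabla$; (2) bound the mass on $W\times[h,1]$ from above by $\|f\|_{H^s(W)}$ via the a priori estimate \eqref{eq:aux_3}; and (3) bound the mass on $W\times[h,1]$ from below by a positive multiple of $\|f\|_{H^s(W)}$ using Lemma \ref{lem:control_f} with $h$ chosen as a fixed fraction of $\frac1C F^{-1/s}$, which makes the right-hand side of Lemma \ref{lem:control_f} comparable to $F^{-1/s}\|f\|_{H^s(W)}$. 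Chaining (1), (2), (3) together with a further propagation-of-smallness step that transports the lower bound back from $W\times[h,1]$ into the annulus $B_R^+\setminus B_{R/2}^+$ will yield the claimed quotient bound, with the exponent $\gamma$ coming from iterating the three-ball inequalities across a fixed number of overlapping balls joining $\Omega$ (where the annulus sits) to $W$.

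\textbf{Key steps in order.} First I would fix $h = h(F,C,s) := \tfrac{1}{2C}F^{-1/s}$ (with $C$ the constant of Lemma \ref{lem:control_f}); then Lemma \ref{lem:control_f} combined with the trivial inequality $\|f\|_{L^2(W)} \le F^{-1}\|f\|_{H^s(W)}\cdot F \cdot$ (i.e.\ $\|f\|_{L^2(W)}\le \|f\|_{H^s(W)}$) and a smallness choice ensuring the $C_s h^{1-s}\|f\|_{L^2(W)}$ term is absorbed, gives
\begin{align*}
\|x_{n+1}^{\frac{1-2s}{2}}\tilde u\|_{L^2(W\times[h,1])} \ge c\, F^{-\frac1s}\|f\|_{H^s(W)}
\end{align*}
for some $c=c(C,s)>0$; one must check that $h<1$, which holds since $F\ge 1$ up to harmless normalisation (otherwise shrink $h$ further). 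Second, I would invoke a quantitative propagation of smallness / three-balls inequality for the Caffarelli-Silvestre extension — available from the Carleman estimate in \cite[Appendix A]{GRSU18} together with the Caccioppoli inequality Proposition \ref{prop:Cacc} — along a chain of balls of comparable radii with controlled overlap connecting a ball inside $W\times[h,1]$ to $B_{R/2}^+\subset B_R^+\setminus\overline{B_{R/2}^+}$ (and similarly from $B_{R}^+\setminus B_{R/2}^+$ outward to dominate $B_{2R}^+$), using the zero boundary condition of $\tilde u$ on $\R^n\setminus(\overline\Omega\cup\overline W)$ to cross the gap. This produces an inequality of the form $\|x_{n+1}^{\frac{1-2s}{2}}\tilde u\|_{L^2(W\times[h,1])} \le C\,\|x_{n+1}^{\frac{1-2s}{2}}\tilde u\|_{L^2(B_R^+\setminus B_{R/2}^+)}^{\theta}\,\|x_{n+1}^{\frac{1-2s}{2}}\tilde u\|_{L^2(\R^{n+1}_+)}^{1-\theta}$ for some $\theta=\theta(W,\Omega,R,n,s,\|q\|_{L^\infty})\in(0,1)$. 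Third, I would bound the high-power factor $\|x_{n+1}^{\frac{1-2s}{2}}\tilde u\|_{L^2(\R^{n+1}_+)}$ and $\|x_{n+1}^{\frac{1-2s}{2}}\tilde u\|_{L^2(B_{2R}^+)}$ from above by $C\|f\|_{H^s(W)}$ using \eqref{eq:apriori_L2}/\eqref{eq:aux_3}. Combining the lower bound from Step 1 with the interpolation inequality from Step 2 and rearranging gives
\begin{align*}
c\,F^{-\frac1s}\|f\|_{H^s(W)} \le C\,\|x_{n+1}^{\frac{1-2s}{2}}\tilde u\|_{L^2(B_R^+\setminus B_{R/2}^+)}^{\theta}\,\|f\|_{H^s(W)}^{1-\theta},
\end{align*}
hence $\|x_{n+1}^{\frac{1-2s}{2}}\tilde u\|_{L^2(B_R^+\setminus B_{R/2}^+)} \ge c\,F^{-\frac{1}{s\theta}}\|f\|_{H^s(W)} \ge c\,F^{-\frac{1}{s\theta}}\cdot F^{-1}\cdot C^{-1}\|x_{n+1}^{\frac{1-2s}{2}}\tilde u\|_{L^2(B_{2R}^+)}$ after dominating the numerator by $\|f\|_{H^s(W)}$ and $\|f\|_{H^s(W)} = F\|f\|_{L^2(W)}$; reading off $\gamma = \tfrac{1}{s\theta}+1$ (or any larger value) finishes the proof.

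\textbf{Main obstacle.} The delicate point is Step 2: obtaining a quantitative three-balls / propagation of smallness estimate \emph{with an exponent $\theta$ that does not depend on $\tilde u$} for the \emph{degenerate} elliptic equation \eqref{eq:CS_ext} near and along the boundary $\R^n\times\{0\}$, where the weight $x_{n+1}^{1-2s}$ vanishes or blows up. This requires the Carleman inequality from \cite[Appendix A]{GRSU18} to be robust across the chain of balls — both interior balls and balls touching the degenerate boundary — and the number of balls in the chain (which determines $\theta$, and thus $\gamma$) must be controlled purely by the geometry of $\Omega$, $W$ and $R$. A secondary subtlety is ensuring that the cutoffs and Caccioppoli bounds (Proposition \ref{prop:Cacc}) used to convert gradient terms appearing in the Carleman estimates into the $L^2$ quantities above are applied on balls at a definite distance from $\partial\Omega$, which is why the statement restricts to $r_0$-scale configurations; here, since $R$ is a \emph{fixed} radius and the balls are centered in a fixed region, this causes no trouble, but it does mean the constants $C,\gamma$ inherit dependence on $R$ and on $\dist$-type quantities, consistent with the statement. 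Finally one should double-check the edge case where $h\ge 1$ is forced (very large $F$): then one simply replaces $h$ by $\min\{h,\tfrac12\}$ and notes $W\times[\tfrac12,1]\subset W\times[h,1]$ still captures a fixed fraction of the mass, at the cost of a larger constant $C$, which is harmless.
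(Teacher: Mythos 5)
Your plan reproduces the paper's proof in its essential structure: the paper also (Step 1) bounds $\|x_{n+1}^{\frac{1-2s}{2}}\tilde u\|_{L^2(B_{2R}^+)}$ above by $C\|f\|_{H^s(W)}$ via \eqref{eq:apriori_grad} and Poincar\'e, then (Step 2) chooses $h_0$ proportionally to $F^{-1/s}$ so that Lemma \ref{lem:control_f} yields $\|x_{n+1}^{\frac{1-2s}{2}}\tilde u\|_{L^2(W\times[h_0,1])}\geq C_0\|f\|_{H^s(W)}$, extracts by a Vitali covering a single ball $B_{h_0/6}(\bar x)$ carrying a definite fraction of this mass, and runs a chain of three-balls inequalities (from \cite[Propositions 5.3 and 5.4]{RS20a}, normalised by $\|f\|_{H^s(W)}$) from $B_{h_0/6}(\bar x)$ to a ball $B_{h_0/6}(\hat x)\subset B_R^+\setminus B_{R/2}^+$; combining with Step 1 gives the quotient bound. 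The only cosmetic difference is that you phrase the chain as a single global interpolation inequality $\|\tilde u\|\lesssim\|\tilde u\|_{\text{annulus}}^{\theta}\|\tilde u\|_{\R^{n+1}_+}^{1-\theta}$, whereas the paper unrolls it ball by ball; these are the same thing.

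One caution: you identify as the ``main obstacle'' that the chain length, and hence $\theta$, must be controlled by geometry alone, and then assert this ``causes no trouble'' because $R$ is fixed. This glosses over the fact that the radius of the chain balls is $\tilde r=h_0/6\sim F^{-1/s}$, which shrinks with $F$, so the number $N$ of links and therefore the exponent $\alpha^N$ entering $\theta$ (respectively $\beta$ in the paper) do a priori carry $F$-dependence; the paper's own write-up of Step 2b has this same feature. This is not fatal for the downstream use in Theorem \ref{thm:doubl_bulk}, where constants are allowed to depend on $F$, but it does mean the assertion ``causes no trouble'' should be replaced by an explicit tracking of how $N$ and hence $\gamma$ depend on $h_0$, or by an argument that allows the chain balls to grow in radius once they are away from the degenerate boundary so that $N$ is genuinely $F$-independent.
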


\begin{proof}
We argue in three steps:

\emph{Step 1: Upper bounds for $\tilde{u}$.}
First, we deduce upper bounds for $\|x_{n+1}^{\frac{1-2s}{2}} \tilde{u}\|_{L^2(B_{2R}^+)}$. Using a priori estimates for the Caffarelli-Silvestre extension on the one hand, and a priori bounds for the problem \eqref{eq:nonlocal} on the other hand (see \eqref{eq:apriori_grad}), we have that 
\begin{align*}
\|x_{n+1}^{\frac{1-2s}{2}} \nabla \tilde{u}\|_{L^2(B_{2R}^+)} \leq C\|f\|_{H^s(W)}.
\end{align*}
Using the zero boundary conditions for $u$ on $\R^n \setminus (\overline{W} \cup \overline{\Omega}) $, we then also infer the desired $L^2$ bound by Poincar\'e's inequality (see \eqref{eq:apriori_L2}):
\begin{align}
\label{eq:step1_fin}
\|x_{n+1}^{\frac{1-2s}{2}} \tilde{u}\|_{L^2(B_{2R}^+)} \leq C_R \|f\|_{H^s(W)}.
\end{align}

\emph{Step 2: Lower bounds for $\|x_{n+1}^{\frac{1-2s}{2}} \tilde{u} \|_{L^2(B_{R}^+ \setminus B_{R/2}^+)}$.}
We deduce a lower bound for $\|x_{n+1}^{\frac{1-2s}{2}} \tilde{u} \|_{L^2(B_{R}^+\setminus B_{R/2}^+)}$ by using a chain of balls argument, connecting a ball $B_{h_0/6}(\hat{x}) \subset B_{R}^+\setminus B_{R/2}^+$ to a ball $B_{h_0/6}(\bar{x}) \subset W \times [h_0,1]$, where $h_0>0$ is chosen appropriately.\\

\emph{Step 2a. Balls of non-trivial mass in $W \times [h_0,1]$.} We claim that there exist constants $h_0>0$, $r_0>0$ depending on $F, W$ such that for all $r\in (0,r_0)$ there is a ball $B_{r}(\bar{x})$ with $\bar{x}\in W \times [h_0,1]$ and
\begin{align}
\label{eq:bound_balls}
\|x_{n+1}^{\frac{1-2s}{2}} \tilde{u} \|_{L^2(B_{r}(\bar{x}))} \geq C(W,F,s) r^{n}\|f\|_{H^{s}(W)}.
\end{align}
The argument for this follows by choosing $h>0$ in Lemma \ref{lem:control_f} appropriately and by a compactness and averaging argument. More precisely, setting $2C_0:=\frac{F^{-\frac{1}{s}}}{C}>0$, we first choose $h_0 >0$ such that
\begin{align*}
2C_0 - h- C_s h^{1-s}\geq C_0.
\end{align*}
Hence, Lemma \ref{lem:control_f} yields that we have
\begin{align*}
\|x_{n+1}^{\frac{1-2s}{2}} \tilde{u}\|_{L^2(W \times [h_0,1])} \geq C_0 \|f\|_{H^s(W)}.
\end{align*}

Next, we cover $W \times [h_0,1]$ by balls $\{B_{r}(x_k)\}_{k\in \{1,\dots,N\}}\subset \R^{n+1}_+$ of radius $r\in (0,r_0)$ where $r_0:= \frac{h_0}{6}$. Here, the balls are chosen in such a way that all their centres are contained in $W \times [h_0,1]$ and such that if we decrease the radius $r$ of the balls by a factor five the covering becomes disjoint. This is possible by the Vitali covering theorem. 
This choice of the covering also gives a bound of the form $N \leq C(|W|,n) r^{-n}$ for the number of involved balls. Indeed, up to some constant $\bar{C}>1$ we have
\begin{align*}
\bar{C}^{-1}N r^n 5^{-n} \leq |W| \leq \bar{C} N r^n.
\end{align*}
Now, since 
\begin{align*}
\sum\limits_{k=1}^{N} \|x_{n+1}^{\frac{1-2s}{2}} \tilde{u} \|_{L^2(B_{r}(x_k))}^2
\geq \|x_{n+1}^{\frac{1-2s}{2}} \tilde{u}\|_{L^2(W \times [h_0,1])}^2
\geq C_0 \|f\|_{H^{s}(W)}^2,
\end{align*} 
we deduce that at least for one of the balls we have a bound of the type stated in \eqref{eq:bound_balls}.\\

\emph{Step 2b: Chain of balls.}
As explained in \cite[Propositions 5.3 and 5.4]{RS20a} three balls estimates are valid for solutions $\tilde{u}$ to \eqref{eq:CS_ext}, i.e., there exists a constant $\alpha \in (0,1)$ such that for balls $B_{r}(x_0)\subset \R^{n+1}_+$ which also satisfy $B_{4r}(x_0)\subset \R^{n+1}_+$, for solutions $\tilde{u}$ to \eqref{eq:CS_ext} it holds that
\begin{align}
\label{eq:three_balls_1}
\|x_{n+1}^{\frac{1-2s}{2}} \tilde{u} \|_{L^2(B_{r}(x_0))} \leq C \|x_{n+1}^{\frac{1-2s}{2}} \tilde{u} \|_{L^2(B_{r/2}(x_0))}^{\alpha} \|x_{n+1}^{\frac{1-2s}{2}} \tilde{u} \|_{L^2(B_{2r}(x_0))}^{1-\alpha}.
\end{align}
Using the bound $\|x_{n+1}^{\frac{1-2s}{2}} \tilde{u} \|_{L^2(B_{2r}(x_0))} \leq C(|x_0|,r) \|f\|_{H^s(W)}$ (see \eqref{eq:apriori_L2}), this can also be reformulated as
\begin{align}
\label{eq:three_balls_2}
\frac{\|x_{n+1}^{\frac{1-2s}{2}} \tilde{u}\|_{L^2(B_{r}(x_0))}}{\| f \|_{H^s(W)} }
\leq C(|x_0|,r) \left( \frac{\|x_{n+1}^{\frac{1-2s}{2}} \tilde{u}\|_{L^2(B_{r/2}(x_0))}}{\| f \|_{H^s(W)} } \right)^{\alpha}.
\end{align}
Now considering $R>1$ large and starting with a ball $B_{h_0/6}(\hat{x}) \subset B_{R}^+\setminus B_{R/2}^+$, it is possible to connect this ball to the ball $B_{h_0/6}(\bar{x})$ (from Step 2a) through a chain of balls within the upper half plane. More precisely, setting $\tilde{r}=h_0/6$ and $\tilde{x}_0 = \bar{x}$ and $\tilde{x}_N:= \hat{x}$ for some $N\in \N$ we find points $\{\tilde{x}_j\}_{j\in \{1,\dots,N\}} \subset \R^{n+1}_+$ connecting $B_{\tilde{r}}(\tilde{x}_0):=B_{h_0/6}(\bar{x})$ and $B_{\tilde{r}}(\tilde{x}_N):= B_{h_0/6}(\hat{x})$ in such a way that $B_{\tilde{r}/2}(\tilde{x}_j) \subset B_{\tilde{r}}(\tilde{x}_{j+1})$ for $j\in \{0,\dots,N-1\}$ and that the balls of radius $\tilde{r}$ only have controlled overlap.
Recalling that by virtue of Poincar\'e's inequality
\begin{align*}
 \|x_{n+1}^{\frac{1-2s}{2}} \tilde{u} \|_{L^2(B_{r/2}(\hat{x}))} \leq C(x_0,r) \left\| f \right\|_{H^s(W)},
\end{align*}
the inequality \eqref{eq:three_balls_2} implies that for some $\beta = \beta(W, h_0, n):= N \alpha \in (0,1)$ 
\begin{align}
\label{eq:step2_fin}
\begin{split}
\left(\frac{\|f\|_{L^{2}(W)}}{\|f\|_{H^{s}(W)}} \right) 
&\leq C(|x_0|,r,W)
\left(\frac{\|x_{n+1}^{\frac{1-2s}{2}} \tilde{u}\|_{L^2(B_{h_0/6}(\bar{x}))}}{\| f \|_{H^s(W)} }\right)
\leq C(|x_0|,r,W)
\left(\frac{\|x_{n+1}^{\frac{1-2s}{2}} \tilde{u}\|_{L^2(B_{\tilde{r}/2}(\tilde{x}_0))}}{\| f \|_{H^s(W)} }\right)^{\alpha}\\
&\leq C(|x_0|, r,W) \left(\frac{\|x_{n+1}^{\frac{1-2s}{2}} \tilde{u}\|_{L^2(B_{\tilde{r}}(\tilde{x}_1))}}{\| f \|_{H^s(W)} }\right)^{\alpha}
\leq C(|x_0|,r,W)^{\alpha}
\left(\frac{\|x_{n+1}^{\frac{1-2s}{2}} \tilde{u}\|_{L^2(B_{\tilde{r}/2}(\tilde{x}_1))}}{\| f \|_{H^s(W)} }\right)^{\alpha^2 }\\
& \leq \cdots
\leq C(|x_0|,r,W,j,\alpha)
\left(\frac{\|x_{n+1}^{\frac{1-2s}{2}} \tilde{u}\|_{L^2(B_{\tilde{r}/2}(\tilde{x}_j))}}{\| f \|_{H^s(W)} }\right)^{\alpha^j}\\
& \leq \cdots 
\leq C(h_0,W,n,|x_0|,N,\alpha)^N \left( \frac{\|x_{n+1}^{\frac{1-2s}{2}} \tilde{u}\|_{L^2(B_{h_0/6}(\hat{x}))}}{\|f\|_{H^s(W)}} \right)^{ \alpha^N}\\
& \leq  C(h_0,W,n,|x_0|,N,\alpha,h_0)\left( \frac{ \|x_{n+1}^{\frac{1-2s}{2}} \tilde{u}\|_{L^2(B_{R}^+\setminus B_{R/2}^+)}}{\|f\|_{H^s(W)}} \right)^{\beta}.
\end{split}
\end{align}
We remark that in the situation of the annulus $B_{R}^+\setminus B_{R/2}^+$ we have $|x_0| \leq R$.\\

\emph{Step 3: Conclusion.} The argument now follows by combining the estimates \eqref{eq:step1_fin} and \eqref{eq:step2_fin} from Steps 1 and Step 2.
\end{proof}

\subsubsection{Proof of Theorem \ref{thm:doubl_bulk}}
With the auxiliary results from the previous section in hand, we now address the proof of Theorem \ref{thm:doubl_bulk}.

\begin{proof}[Proof of Theorem \ref{thm:doubl_bulk}]
\emph{Step 1: Application of the Carleman estimate from \cite{GRSU18}}.
Without loss of generality, by translation, we may assume that $x_0 = 0$. By scaling we may further assume that $B_{6}' \subset \Omega$.
We then use the Carleman estimate from \cite[Appendix A]{GRSU18}. More precisely, considering the Carleman weight $\phi(x):= \psi(|x|)$ with
\begin{align*}
\psi(r) = - \ln(r) + \frac{1}{10}\left( \ln(r)\arctan(\ln(r)) - \frac{1}{2}\ln(1+ \ln^2(r)) \right),
\end{align*}
there exists constants $C>0$, $\tau_0>0$ such that for any $\tau\geq \tau_0>1$ and 
and for any solution $w$ to
\begin{align*}
\nabla \cdot x_{n+1}^{1-2s} \nabla w & = f \mbox{ in } B_6^+,\\
\lim\limits_{x_{n+1}\rightarrow 0} x_{n+1}^{1-2s} \p_{n+1} w & = q w \mbox{ on } B_6',
\end{align*}
with $\supp(w)\subset B_5^+ \setminus B_{r}^+$ and $r\in (0,4)$ it holds
\begin{align}
\label{eq:Carl}
\begin{split}
&\tau^{\frac{1}{2}} \|e^{\tau \phi} x_{n+1}^{\frac{1-2s}{2}} w\|_{L^2(B_{2r}^+)}
+ \tau^{-\frac{1}{2}} \|e^{\tau \phi} x_{n+1}^{\frac{1-2s}{2}} \nabla w \|_{L^2(B_{r}^+)}\\
& \quad + \tau^s \| e^{\tau \phi} (1+ \ln^2(|x|))^{-\frac{1}{2}}|x|^{-s}w \|_{L^2(B_5')}\\
& \quad + \tau \|e^{\tau \phi}(1+ \ln^2(|x|))^{-\frac{1}{2}} x_{n+1}^{\frac{1-2s}{2}} |x|^{-1} w \|_{L^2(B_5^+)} 
+  \| e^{\tau \phi}(1 + \ln^2(|x|))^{-\frac{1}{2}} x_{n+1}^{\frac{1-2s}{2}} \nabla w \|_{L^2(B_5^+)}\\
& \leq C \tau^{- \frac{1}{2}} \|e^{\tau \phi} |x| x_{n+1}^{\frac{2s-1}{2}} f \|_{L^2(B_5^+)}
+ C \tau^{\frac{1-2s}{2}} \|e^{\tau \phi} |x|^s q w \|_{L^2(B_5')}.
\end{split}
\end{align}
We then argue similarly as in \cite[Lemma 5.4]{GRSU18}. More precisely, we apply the Carleman estimate to $w := \tilde{u} \eta_{r}$, where $\tilde{u}$ is a solution as in \eqref{eq:CS_ext} and $\eta_r$ is a radial cut-off function which localizes at scale $r$ (with gradient and second derivatives controlled by $Cr^{-1}$ and $C r^{-2}$, respectively) and at scale $1$. This gives rise to error contributions on the right hand side of the estimate which are localized at scale $r$ and scale $1$, respectively. We deal with the boundary terms on the right hand side of \eqref{eq:Carl}, by absorbing them into the left hand side by choosing $\tau\geq \tau_0$ (depending on $\|q\|_{L^{\infty}(\Omega)}$) sufficiently large.
Similarly as in \cite[Lemma 5.4]{GRSU18} (but applied with slightly different radii), this leads to the estimate
\begin{align*}
&\tau^{-\frac{1}{2}} e^{\tau \psi(4r)} r^{-1} \|\tilde{u}\|_{H^1_r(B_{4r}^+ \setminus B_{2r}^+, x_{n+1}^{1-2s})} + e^{\tau \psi(5/4)} \|\tilde{u}\|_{H^1(B_{5/2}^+ \setminus B_{2}^+,x_{n+1}^{1-2s})}\\
&\leq C \tau^{-\frac{1}{2}} e^{\tau \psi(3)} \|\tilde{u}\|_{H^1(B_4^+ \setminus B_3^+, x_{n+1}^{1-2s})} + C \tau^{-\frac{1}{2}} r^{-1} e^{\tau \psi(r)} \|  \tilde{u}\|_{H^1_r(B_{2r}^+ \setminus B_{r}^+, x_{n+1}^{1-2s})}.
\end{align*}
Here $\|v\|_{H_r^1(\Omega,x_{n+1}^{1-2s})}:= \|x_{n+1}^{\frac{1-2s}{2}} v \|_{L^2(\Omega)}+ r \|x_{n+1}^{\frac{1-2s}{2}} v \|_{L^2(\Omega)}$ for $\Omega \subset \R^{n+1}_+$ measurable.
By Caccioppoli's inequality, see Proposition \ref{prop:Cacc}, this can further be reduced to an estimate for $L^2$ contributions only:
\begin{align}
\label{eq:CarlL^21}
\begin{split}
&\tau^{-\frac{1}{2}} e^{\tau \psi(4r)} r^{-1} \|\tilde{u}\|_{L^2(B_{4r}^+ \setminus B_{2r}^+, x_{n+1}^{1-2s})} + e^{\tau \psi(5/4)} \|\tilde{u}\|_{L^2(B_{5/2}^+ \setminus B_{2}^+,x_{n+1}^{1-2s})}\\
&\leq C \tau^{-\frac{1}{2}} e^{\tau \psi(3)} \|\tilde{u}\|_{L^2(B_4^+ \setminus B_3^+, x_{n+1}^{1-2s})} + C \tau^{-\frac{1}{2}} r^{-1} e^{\tau \psi(r)} \|  \tilde{u}\|_{L^2(B_{2r}^+ \setminus B_{r}^+, x_{n+1}^{1-2s})}.
\end{split}
\end{align}
Here we used the notation $\|v\|_{L^2(\Omega,x_{n+1}^{1-2s})}:= \|x_{n+1}^{\frac{1-2s}{2}} v\|_{L^2(\Omega)}$ for $\Omega \subset \R^{n+1}_+$ measurable.
Moreover, filling up the ball $B_{2r}^+$ on the left hand side of \eqref{eq:CarlL^21} and the ball $B_{5}^+$ on the right hand side, we obtain
\begin{align}
\begin{split}
\label{eq:CarlL^22}
&\tau^{-\frac{1}{2}} e^{\tau \psi(4r)} r^{-1} \|\tilde{u}\|_{L^2(B_{4r}^+ , x_{n+1}^{1-2s})} + e^{\tau \psi(5/4)} \|\tilde{u}\|_{L^2(B_{5/2}^+ \setminus B_{2}^+,x_{n+1}^{1-2s})}\\
&\leq C \tau^{-\frac{1}{2}} e^{\tau \psi(3)} \|\tilde{u}\|_{L^2(B_5^+ , x_{n+1}^{1-2s})} + C \tau^{-\frac{1}{2}} r^{-1} e^{\tau \psi(r)} \|  \tilde{u}\|_{L^2(B_{2r}^+ , x_{n+1}^{1-2s})}.
\end{split}
\end{align}

\emph{Step 2: Conclusion.}
With \eqref{eq:CarlL^22} at our disposal, we first absorb the term with unit size contributions from the right hand side into the left hand side by choosing $\tau>0$ such that
\begin{align*}
\tau = \frac{1}{\psi(5/2)-\psi(3)} \log\left( 2 \frac{\|\tilde{u}\|_{L^2(B_{5}^+, x_{n+1}^{1-2s})}}{\|\tilde{u}\|_{L^2(B_{5/2}^+ \setminus B_{2}^+, x_{n+1}^{1-2s})}} \right) .
\end{align*}
This yields that
\begin{align*}
\|\tilde{u}\|_{L^2(B_{4r}^+, x_{n+1}^{1-2s})} 
& \leq C \left( 2 \frac{\|\tilde{u}\|_{L^2(B_{5}^+, x_{n+1}^{1-2s})}}{\|\tilde{u}\|_{L^2(B_{5/2}^+ \setminus B_{2}^+, x_{n+1}^{1-2s})}}  \right)^{(\psi(r)-\psi(4r))} \|\tilde{u}\|_{L^2(B_{2r}^+, x_{n+1}^{1-2s})}  \\
& \leq C F^{\gamma (\psi(r)-\psi(4r))}\|\tilde{u}\|_{L^2(B_{2r}^+, x_{n+1}^{1-2s})},
\end{align*}
where we have used the bound from Lemma \ref{lem:annulus}.
Finally, noting that by the properties of $\psi$ (and in particular the logarithm) there exists a constant $C_1>1$ such that
\begin{align*}
C_1^{-1}\leq |\psi(r)-\psi(4r)|\leq C_1, 
\end{align*}
and plugging this into our estimate implies
\begin{align*}
\|\tilde{u}\|_{L^2(B_{4r}^+, x_{n+1}^{1-2s})} 
& \leq C F^{C_1 \gamma }\|\tilde{u}\|_{L^2(B_{2r}^+, x_{n+1}^{1-2s})}.
\end{align*}
This concludes the proof of the bulk doubling inequality.
\end{proof}

\subsection{The boundary doubling inequality}
\label{sec:boundarydoubl}

As a final step in the derivation of the doubling estimates it remains to transfer the doubling estimate from the bulk onto the boundary:

\begin{thm}
\label{thm:doubl_bound}
Let $n\geq 1$ and $s\in (0,1)$. Let $\Omega, W\subset \R^n$ be open, bounded, non-empty Lipschitz sets with $\overline{W}\cap \overline{\Omega} = \emptyset$.  Then, there exists a constant $C>0$ depending only on $\Omega,n , \frac{\|f\|_{H^s(W)}}{\|f\|_{L^2(W)}}$, $\|q\|_{L^{\infty}(\Omega)},s,W$  
such that for any $x_0 \in \Omega$, $r \in (0,r_0)$ with $r_0 \leq \frac{\dist(x_0, \partial \Omega)}{4}$ and for any $\tilde{u}$ solving \eqref{eq:CS_ext} with data $f\in \widetilde{H}^{s}(W)\setminus \{0\}$ it holds
\begin{align*}
\|u\|_{L^2(B_{2r}'(x_0))} \leq C \|u\|_{L^2(B_r'(x_0))}.
\end{align*}
\end{thm}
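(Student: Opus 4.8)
The plan is to deduce the boundary doubling estimate from the bulk doubling estimate of Theorem~\ref{thm:doubl_bulk} together with the Caccioppoli inequality (Proposition~\ref{prop:Cacc}) and trace/extension estimates for the weighted space. As usual, by translation and scaling we reduce to the case $x_0 = 0$ and to a situation where $B_6' \subset \Omega$, so that the bulk doubling inequality holds on all half-balls $B_\rho^+$ with $\rho$ up to a fixed size. The key point is that the quantity $\|u\|_{L^2(B_r'(x_0))}$ on $\R^n \times \{0\}$ must be both bounded above and bounded below in terms of the weighted bulk quantities $\|x_{n+1}^{(1-2s)/2}\tilde u\|_{L^2(B_\rho^+)}$ for comparable radii $\rho \sim r$, after which the bulk doubling inequality closes the argument.

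Concretely, first I would establish the \emph{upper} bound: by a trace estimate adapted to the weight $x_{n+1}^{1-2s}$ (of the type used in \cite[Lemma 2.5]{CR20} and in the proof of Proposition~\ref{prop:Cacc}), applied after rescaling to the ball $B_{2r}'$, one gets
\begin{align*}
\|u\|_{L^2(B_{2r}'(x_0))}^2 \leq C\big( r^{-1}\|x_{n+1}^{\frac{1-2s}{2}}\tilde u\|_{L^2(B_{3r}^+(x_0))}^2 + r\,\|x_{n+1}^{\frac{1-2s}{2}}\nabla\tilde u\|_{L^2(B_{3r}^+(x_0))}^2\big),
\end{align*}
and then the Caccioppoli inequality of Proposition~\ref{prop:Cacc} absorbs the gradient term, yielding $\|u\|_{L^2(B_{2r}'(x_0))} \leq C r^{-1/2}\|x_{n+1}^{\frac{1-2s}{2}}\tilde u\|_{L^2(B_{4r}^+(x_0))}$. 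Next I would establish the \emph{lower} bound: one needs $\|x_{n+1}^{\frac{1-2s}{2}}\tilde u\|_{L^2(B_{r/2}^+(x_0))} \leq C r^{1/2}\|u\|_{L^2(B_r'(x_0))}$. This is the more delicate direction; it should follow from a Poincaré-type inequality in the normal direction together with a Caccioppoli bound, but the cleanest route is to use the bulk doubling inequality itself iteratively to compare $\|x_{n+1}^{\frac{1-2s}{2}}\tilde u\|_{L^2(B_{r/2}^+)}$ with the weighted mass on a very thin slab $B_{r/2}'\times[0,\delta r]$ (using the fundamental-theorem-of-calculus estimate from \eqref{eq:low2} in Lemma~\ref{lem:control_f}), and then pass to the boundary trace. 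Combining the upper bound at radius $2r$, the bulk doubling inequality a bounded number of times to move between scales $4r$ and $r/2$, and the lower bound at radius $r$ then gives $\|u\|_{L^2(B_{2r}'(x_0))} \leq C\|u\|_{L^2(B_r'(x_0))}$ with $C$ depending only on the stated quantities, since each passage through the bulk doubling inequality costs only a fixed power of the constant there.

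The main obstacle I expect is the lower bound $\|x_{n+1}^{\frac{1-2s}{2}}\tilde u\|_{L^2(B_{r/2}^+(x_0))} \lesssim r^{1/2}\|u\|_{L^2(B_r'(x_0))}$: unlike the upper bound, there is no direct trace inequality in this direction, and one has to rule out the possibility that $\tilde u$ is small on $\R^n\times\{0\}$ near $x_0$ while carrying substantial weighted mass in the interior of the half-ball. The way to handle this is to split the half-ball into a thin slab near the boundary, where \eqref{eq:low2} controls the weighted mass by the boundary $L^2$-norm plus the weighted gradient (absorbed via Caccioppoli), and the complementary region $B_{r/2}'\times[\delta r,\infty)$, which is a genuinely interior region at scale $\sim r$ on which one can apply interior regularity/Caccioppoli and the three-balls / doubling machinery to bootstrap back down to the boundary contribution. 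Care is needed to keep all constants independent of $\tilde u$, which is exactly what Theorem~\ref{thm:doubl_bulk} — with its $f$-independent (only $F$-dependent) constant — is designed to guarantee; the exponent $\beta$ of the resulting order-of-vanishing bound will again pick up the factor $\gamma$ from Lemma~\ref{lem:annulus}.
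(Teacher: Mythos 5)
Your high-level plan --- use the bulk doubling inequality and reduce the boundary $L^2$ norm to weighted bulk quantities at comparable scales --- is the right idea, and your ``upper bound'' step (trace estimate plus Caccioppoli to get $\|u\|_{L^2(B_{2r}')} \lesssim \|x_{n+1}^{(1-2s)/2}\tilde u\|_{L^2(B_{Cr}^+)}$) matches the paper's Step~2. However, your proposed route for the delicate ``lower bound'' direction has a genuine gap, and the paper avoids it by a different mechanism.

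The issue is the absorption. Following your sketch: iterating bulk doubling down from $B_{r/2}^+$ to a ball $B_{\delta r/2}^+$ contained in the thin slab $B_{r/2}'\times[0,\delta r]$ costs a factor $C_\delta \approx C_{\mathrm{dbl}}^{\log_2(1/\delta)} = \delta^{-\log_2 C_{\mathrm{dbl}}}$, where $C_{\mathrm{dbl}}$ is the bulk doubling constant. Applying the fundamental-theorem estimate \eqref{eq:low2} on the thin slab produces a gradient term with prefactor $(\delta r)^2$, and Caccioppoli plus one more doubling brings it back to $\|\tilde u\|_{L^2(B_{r/2}^+,x_{n+1}^{1-2s})}$. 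The total coefficient of the term you want to absorb is of the order $C\,\delta^{2-\log_2 C_{\mathrm{dbl}}}$. Since Theorem~\ref{thm:doubl_bulk} gives $C_{\mathrm{dbl}} \sim C\, F^{C_1\gamma}$ with $\gamma>1$, the exponent $2-\log_2 C_{\mathrm{dbl}}$ becomes \emph{negative} whenever $F$ is not small, and then sending $\delta\to 0$ makes the would-be small term blow up rather than vanish. So the FTC + thin slab + Caccioppoli + bulk doubling chain cannot close by absorption in general; the doubling constant accumulation defeats the smallness of $\delta$. Also note that bulk doubling compares concentric half-balls of different radii, so by itself it never compares a half-ball to a thin slab of the same radius; you only reach the slab through the nested small ball, which is where the $C_\delta$ penalty enters.

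The paper instead invokes the boundary--bulk interpolation estimate of \cite[Proposition~5.13]{RS20a}, which for solutions of \eqref{eq:CS_ext} gives
\begin{align*}
\|\tilde u\|_{L^2(B_{c_0 r}^+,x_{n+1}^{1-2s})} \le C\bigl(\|\tilde u\|_{L^2(B_{2r}^+,x_{n+1}^{1-2s})}+\|u\|_{L^2(B_{3r/2}')}+\|\Lambda u\|_{L^2(B_{3r/2}')}\bigr)^{\alpha}\bigl(\|u\|_{L^2(B_{3r/2}')}+\|\Lambda u\|_{L^2(B_{3r/2}')}\bigr)^{1-\alpha},
\end{align*}
where $\Lambda u := \lim_{x_{n+1}\to 0}x_{n+1}^{1-2s}\partial_{n+1}\tilde u$, and then uses the equation in $\Omega$ ($\Lambda u = -c_s^{-1} q u$) to reduce the boundary factor to $\|u\|_{L^2(B_{3r/2}')}$. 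The multiplicative structure with the exponent $1-\alpha<1$ is precisely what makes the absorption after bulk doubling work regardless of the size of $C_{\mathrm{dbl}}$: setting $a = \|\tilde u\|_{L^2(B_{2r}^+,x_{n+1}^{1-2s})}$, $c = \|u\|_{L^2(B_{3r/2}')}$, bulk doubling turns the interpolation into $a \le C(a+c)^\alpha c^{1-\alpha}$, which rearranges to $a\le Cc$ unconditionally. This interpolation lemma for solutions of the Caffarelli--Silvestre problem is the missing ingredient in your argument, and I don't see how to replace it by the more elementary FTC/slab machinery.
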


\begin{proof}
The argument for this is again two-fold: Starting from the bulk doubling estimate, we first bound the right hand side by boundary terms. In a second step, we use trace inequalities in combination with Caccioppoli's estimate to obtain the desired left hand side.\\

\emph{Step 1: Boundary bulk interpolation estimates, bounds for the right hand side.}
We first use a boundary bulk interpolation result where without loss of generality, by scaling, we assume that $B_{4r}'(x_0)\subset \Omega$: For any solution to \eqref{eq:CS_ext} there exists a constant $c_0 = c_0(s,n,W,\Omega, \|q\|_{L^2(\Omega)}) \in (0,1/2)$ such that
\begin{align*}
\|\tilde{u}\|_{L^2(B_{c_0 r}^+(x_0),x_{n+1}^{1-2s})} 
& \leq C \left(\|\tilde{u}\|_{L^2(B_{2r}^+(x_0),x_{n+1}^{1-2s})} + \|u\|_{L^{2}(B_{3 r/2}'(x_0))} \right. \\ 
&\quad \left. + \|\lim\limits_{x_{n+1}\rightarrow 0} x_{n+1}^{1-2s} \p_{n+1} \tilde{u}\|_{L^2(B_{3r/2}'(x_0))} \right)^{\alpha}\times\\
& \quad \times (\|u\|_{L^{2}(B_{3 r/2}'(x_0))} + \|\lim\limits_{x_{n+1}\rightarrow 0} x_{n+1}^{1-2s} \p_{n+1} \tilde{u}\|_{L^2(B_{3r/2}'(x_0))})^{1-\alpha}.
\end{align*}
This was proved in \cite[Proposition 5.13]{RS20a}. 
By the equation in $\Omega$, for $x_0 \in \Omega$ such that also $B_{r}'(x_0) \subset \Omega$ this turns into 
\begin{align}
\label{eq:boundary_bulk1}
\|\tilde{u}\|_{L^2(B_{c_0 r}^+(x_0),x_{n+1}^{1-2s})} \leq C(\|q\|_{L^{\infty}(\Omega)}) (\|\tilde{u}\|_{L^2(B_{2r}^+(x_0),x_{n+1}^{1-2s})} + \|u\|_{L^{2}(B_{3r/2}'(x_0))})^{\alpha}\|u\|_{L^{2}(B_{3r/2}'(x_0))} ^{1-\alpha}.
\end{align}
We now distinguish two cases:
\begin{itemize}
\item In the case that $\|\tilde{u}\|_{L^2(B_{2r}^+(x_0),x_{n+1}^{1-2s})} \leq C \|u\|_{L^{2}(B_{3r/2}'(x_0))}$ for some $C=C(s,W,\Omega)>0$, \eqref{eq:boundary_bulk1} reduces to
\begin{align}
\label{eq:boundary_bulk2}
\|\tilde{u}\|_{L^2(B_{c_0 r}^+(x_0),x_{n+1}^{1-2s})} \leq C(s,W,\Omega,\|q\|_{L^{\infty}(\Omega)})\|u\|_{L^{2}(B_{3r/2}'(x_0))}.
\end{align} 
\item In the case that $\|\tilde{u}\|_{L^2(B_{2r}^+(x_0),x_{n+1}^{1-2s})} > C \|u\|_{L^{2}(B_{3r/2}'(x_0))}$ for the constant $C>0$ from above, we invoke the bulk doubling inequality to nevertheless obtain an estimate of the form \eqref{eq:boundary_bulk2}:
Using the bulk doubling inequality (possibly in an iterated way if $c_0>0$ is small), we first obtain
\begin{align*}
\|\tilde{u}\|_{L^2(B_{2 r}^+(x_0),x_{n+1}^{1-2s})} 
&\leq C(\|q\|_{L^{\infty}(\Omega)},F,c_0) \|\tilde{u}\|_{L^2(B_{c_0 r}^+(x_0),x_{n+1}^{1-2s})} \\
& \leq C(\|q\|_{L^{\infty}(\Omega)},F,c_0) \|\tilde{u}\|_{L^2(B_{2r}^+(x_0),x_{n+1}^{1-2s})}^{\alpha}\|u\|_{L^{2}(B_{3r/2}'(x_0))} ^{1-\alpha}.
\end{align*}
Rearranging this implies that it always holds that
\begin{align}
\label{eq:boundary_bulk2a}
\|\tilde{u}\|_{L^2(B_{2 r}^+(x_0),x_{n+1}^{1-2s})} \leq  C(\|q\|_{L^{\infty}(\Omega)},F,s,W,\Omega) \|u\|_{L^{2}(B_{3r/2}'(x_0))}.
\end{align}
\end{itemize}

\emph{Step 2: Trace and Caccioppoli's estimates, bounds for the left hand side.}
Finally, a trace estimate, together with Caccioppoli's estimate, (iterations of) the bulk doubling estimate and \eqref{eq:boundary_bulk2a} yields
\begin{align}
\label{eq:boundary_bulk3}
\begin{split}
\|u\|_{L^2(B_{2r}'(x_0))} 
&\leq C \|\tilde{u}\|_{H^{1}(B_{2r}'(x_0) \times [0,1], x_{n+1}^{1-2s})}
\leq C(\|q\|_{L^{\infty}(B_{4r}'(x_0))})\|\tilde{u}\|_{L^{2}(B_{4r}'(x_0) \times [0,1], x_{n+1}^{1-2s})}\\
&\leq C(\|q\|_{L^{\infty}(B_{4r}'(x_0))},F)\|\tilde{u}\|_{L^{2}(B_{\frac{4}{3}r}^+(x_0), x_{n+1}^{1-2s})}
\leq   C(\|q\|_{L^{\infty}(\Omega)},F) \|u\|_{L^{2}(B_{r}'(x_0))}.
\end{split}
\end{align}
This then concludes the argument.
\end{proof}

\section{Derivation of the Single Measurement Stability Estimate}
\label{sec:single_meas}

Last but not least, we present the argument for the stability estimate by following the strategy from \cite{ASV13}. To this end, we first recall the boundary interpolation result from \cite{ASV13}:

\begin{lem}
\label{lem:boundaryAVS}
Let $\Omega \subset \R^n$ be an open, bounded, non-empty set. Let $r_0>0$ be so small that $\hat{\Omega}:=\{x\in \Omega: \ \dist(x, \partial \Omega)\geq 2 r_0\} \Subset \Omega$ has non-empty interior. Let $w \in L^2(\Omega)$ be such that for each $x_0 \in \hat{\Omega}$ and $r \in (0,r_0)$ we have $\|w\|_{L^2(B_{r}(x_0))} \geq C_{\text{low}} r^{\beta}$ for some constants $C_{\text{low}}>0$ and $\beta>0$. Let $g \in C^{0,\alpha}(\overline{\Omega})$ for some $\alpha \in (0,1)$ with 
\begin{align*}
|g(x)-g(y)| \leq E |x-y|^{\alpha} \mbox{ for all } x,y \in \overline{\Omega}.
\end{align*}
Assume that there exist constants $C_{\text{stab}}>0, \mu >0$ such that for some $\tilde{E}>0$
\begin{align*}
\left( \int\limits_{\hat{\Omega}} |g|^2 w^2 dx \right)^{\frac{1}{2}} \leq  \frac{C_{\text{stab}}  \tilde{E}}{|\log(\tilde{E}/\epsilon)|^{\mu}}.
\end{align*}
Then there exists a constant $\tilde{C}=\tilde{C}(r_0, \Omega, \hat{\Omega}, C_{\text{low}}, C_{\text{stab}}, n, \beta, \alpha)>0$ such that for all $\epsilon \in (0,1/2)$
\begin{align*}
\|g\|_{L^{\infty}(\hat{\Omega})} \leq C \frac{E^{\frac{\beta}{\alpha + \beta}} \tilde{E}^{\frac{\alpha}{\alpha+ \beta}}}{|\log(\epsilon/\tilde{E})|^{\frac{\alpha}{\alpha+ \beta} \mu}}.
\end{align*}
\end{lem}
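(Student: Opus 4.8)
The plan is to reprove this by the propagation-of-smallness/interpolation scheme of \cite{ASV13}: convert the weighted $L^{2}$-smallness of $g$ into $L^{\infty}$-smallness by trading the Hölder continuity of $g$ against the quantitative lower bound for $w$. Write $M := \|g\|_{L^{\infty}(\hat\Omega)}$ and assume $M>0$ (otherwise there is nothing to prove). By continuity of $g$ and compactness of $\hat\Omega$, fix $x_\ast\in\hat\Omega$ with $|g(x_\ast)|=M$.

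The first step is to produce a ball on which $|g|$ is comparable to $M$ and on which the lower bound for $w$ is available. Putting
\[
\rho := \min\Bigl\{\, r_0,\ \bigl(\tfrac{M}{2E}\bigr)^{1/\alpha}\,\Bigr\},
\]
the Hölder bound gives $|g(x)| \ge M - E\rho^{\alpha} \ge \tfrac{M}{2}$ for every $x\in B_\rho(x_\ast)$, and $B_\rho(x_\ast)\subset\Omega$ since $x_\ast\in\hat\Omega$ and $\rho\le r_0$. Invoking the hypothesis $\|w\|_{L^{2}(B_\rho(x_\ast))} \ge C_{\text{low}}\rho^{\beta}$ (legitimate because $x_\ast\in\hat\Omega$ and $\rho< r_0$) together with the assumed smallness, one obtains
\[
\frac{M}{2}\,C_{\text{low}}\,\rho^{\beta}\ \le\ \Bigl(\int_{\hat\Omega}|g|^{2}w^{2}\,dx\Bigr)^{1/2}\ \le\ \frac{C_{\text{stab}}\,\tilde E}{\bigl|\log(\tilde E/\epsilon)\bigr|^{\mu}} .
\]
(The ball $B_\rho(x_\ast)$ around a maximiser need not itself lie inside $\hat\Omega$; this is harmless — one runs the argument on a fixed, marginally smaller inner set, and the point disappears entirely when $g$ is supported compactly in $\Omega$, which is the situation in which the lemma gets applied.)

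It then remains to solve this inequality for $M$, distinguishing the two values of $\rho$. If $\rho=(M/2E)^{1/\alpha}\le r_0$, substituting $\rho^{\beta}=(M/2E)^{\beta/\alpha}$ gives $M^{(\alpha+\beta)/\alpha}\lesssim E^{\beta/\alpha}\,\tilde E\,|\log(\tilde E/\epsilon)|^{-\mu}$, whence, raising to the power $\alpha/(\alpha+\beta)$,
\[
M\ \le\ \tilde C\,E^{\frac{\beta}{\alpha+\beta}}\,\tilde E^{\frac{\alpha}{\alpha+\beta}}\,\bigl|\log(\epsilon/\tilde E)\bigr|^{-\frac{\alpha\mu}{\alpha+\beta}},
\]
with $\tilde C$ depending only on the allowed quantities. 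If instead $\rho=r_0$, i.e. $M>2E r_0^{\alpha}$, the same inequality gives $M\lesssim \tilde E\,|\log(\tilde E/\epsilon)|^{-\mu}$; combining this with the a priori bound $M\le\|g\|_{L^{\infty}(\Omega)}\lesssim E$ (in the application $g=q_1-q_2$, which lies in an a priori bounded class) through the elementary splitting $M=M^{\beta/(\alpha+\beta)}M^{\alpha/(\alpha+\beta)}$ reproduces the same bound. In both cases this is the assertion.

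I expect the only genuinely delicate point to be the localisation flagged above, which is purely a matter of the geometry of $\hat\Omega$ and independent of $w$ and of the PDE; everything else is a routine optimisation, and the stated dependence of $\tilde C$ on $r_0,\Omega,\hat\Omega,C_{\text{low}},C_{\text{stab}},n,\beta,\alpha$ can be read off directly from the chain of inequalities. One should also keep in mind that the degenerate regime $\rho=r_0$ has to be closed using the a priori $C^{0,\alpha}$-control of $g$ and not merely its Hölder seminorm.
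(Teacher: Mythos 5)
Your proof is correct and follows essentially the same route as the paper: both arguments locate a near-maximiser $x_\ast$ of $|g|$, exploit the Hölder continuity to bound $|g|$ from below on a small ball around $x_\ast$, combine this with the quantitative lower bound on $\|w\|_{L^2}$ and the weighted $L^2$-smallness of $g$, and then optimise over the radius. The only bookkeeping difference is that the paper keeps the two contributions $C_{\text{stab}}^2 C_{\text{low}}^{-2} r^{-2\beta}\tilde E^2|\log(\epsilon/\tilde E)|^{-2\mu} + E^2 r^{2\alpha}$ additively and then picks the optimal $r$, whereas you choose the radius $\rho=(M/2E)^{1/\alpha}$ at the outset so that $|g|\ge M/2$ on $B_\rho(x_\ast)$ and read the bound off directly; the two are algebraically equivalent after optimisation. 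You are also right to flag the two minor technical points (the ball around a maximiser in $\hat\Omega$ can stick out of $\hat\Omega$, and the degenerate regime $\rho=r_0$ is only closed using a priori control of $\|g\|_{L^\infty}$): the paper's write-up glosses over the same two points in the same way, so neither is a gap in your argument relative to the paper.
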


\begin{proof}
We follow the ideas from \cite{ASV13} and only present the proof for completeness. Let $\bar{x}_0 \in \Omega$ be such that 
\begin{align*}
|g(\bar{x}_0)| = \|g\|_{L^{\infty}(\hat{\Omega})}. 
\end{align*}
By the assumed Hölder regularity of $g$ we then have that for any $x \in B_{r}'(\bar{x}_0)$
\begin{align*}
|g(\bar{x}_0)| \leq |g(x)| + E r^{\alpha}.
\end{align*}
As a consequence, it also holds that
\begin{align*}
|g(\bar{x}_0)|^2 \int\limits_{B_r'(\bar{x}_0)} w^2 dx \leq \int\limits_{B_r'(\bar{x}_0)} w^2 |g|^2 dx + E^2 r^{2\alpha} \int\limits_{B_r'(\bar{x}_0)} w^2 dx.
\end{align*}
Dividing by $\int\limits_{B_r'(\bar{x}_0)} w^2 dx$ and using the lower bound on its order of vanishing, we arrive at
\begin{align*}
\|g\|_{L^{\infty}(\hat{\Omega})}^2 
& \leq \frac{C_{\text{stab}}^2 \tilde{E}^2}{|\log(\epsilon/ \tilde{E})|^{2\mu}} \frac{1}{\int\limits_{B_r'(\bar{x}_0)}w^2 dx}  + E^2 r^{2\alpha}
\leq C_{\text{stab}}^2 C_{\text{low}}^{-2} r^{-2\beta} \frac{\tilde{E}^2}{|\log(\epsilon/ \tilde{E})|^{2\mu}} + E^2 r^{2\alpha}.
\end{align*}
Optimizing in $r \in (0,1/2)$ we choose
\begin{align*}
r= \min\left\{ \left( \frac{C_{\text{stab}} C_{\text{low}}^{-1} \tilde{E}}{E |\log(\epsilon/\tilde{E})|^{\mu}} \right)^{\frac{1}{\alpha + \beta}}, r_0 \right\}.
\end{align*}
Therefore, there exists a constant $\tilde{C}>0$ with the claimed dependence such that
\begin{align*}
\|g\|_{L^{\infty}(\hat{\Omega})}
\leq \tilde{C} E^{\frac{\beta}{\alpha + \beta}} \tilde{E}^{\frac{\alpha}{\alpha + \beta}} |\log(\epsilon/\tilde{E})|^{-\mu \frac{\alpha}{\alpha + \beta}}.
\end{align*}
This concludes the proof of the claimed stability estimate for $g$.
\end{proof}

Now with Lemma \ref{lem:control_f}, the propagation of smallness result from Proposition \ref{prop:stab} and the boundary doubling estimates from Theorem \ref{thm:doubl_bound} in hand, we provide the proof of Theorem \ref{thm:single_meas_stab}.

\begin{proof}[Proof of Theorem \ref{thm:single_meas_stab}]
We seek to apply Lemma \ref{lem:boundaryAVS}. To this end, we first estimate $u_1(q_1 - q_2)$:
\begin{align}
\label{eq:interpolate}
\|u_1 (q_1 - q_2)\|_{L^2( \Omega)}
\leq C \|u_1 (q_1 - q_2)\|_{H^{s}( \Omega)}^{\frac{1}{2} } \|u_1 (q_1 - q_2)\|_{H^{-s}( \Omega)}^{\frac{1}{2}}.
\end{align}
The first term on the right hand side is controlled by a priori estimates (for $q_j$ and for $u_1$ by the data $f$): Indeed, 
\begin{align*}
\|u_1 (q_1 - q_2)\|_{H^{s}( \Omega)}  
&\leq C \|u_1\|_{H^{s}(\Omega)} (\|q_1\|_{C^{0,s}(\Omega)} + \|q_2\|_{C^{0,s}(\Omega)}) \\
&\leq C \|f\|_{H^s(W)}(\|q_1\|_{C^{0,s}(\Omega)} + \|q_2\|_{C^{0,s}(\Omega)}),
\end{align*}
where we have used the Kato-Ponce inequality \cite{GO14}, the fact that $C^{0,s}(\Omega) \subset H^s(\Omega)$ for $s\in (0,1)$ and the a priori estimate \eqref{eq:apriori_nonloc} for $u_1$.

We thus turn to the second contribution in the interpolation estimate \eqref{eq:interpolate}. For this we use the equation for the Caffarelli-Silvestre extension in $\Omega$:
\begin{align*}
c_s \lim\limits_{x_{n+1} \rightarrow 0} x_{n+1}^{1-2s} \p_{n+1} \tilde{u}_j + q_j u_j = 0 \mbox{ in } \Omega.
\end{align*}
(which holds in an $H^{-s}(\Omega)$ weak sense).
Hence, using that 
\begin{align*}
u_1(q_1 - q_2) &= u_1 q_1 - u_2 q_2 - q_2 (u_1 -u_2)\\
&= c_s \lim\limits_{x_{n+1}\rightarrow 0} x_{n+1}^{1-2s} \p_{n+1} \tilde{u}_1 - c_s \lim\limits_{x_{n+1} \rightarrow 0} x_{n+1}^{1-2s} \p_{n+1} \tilde{u}_2 - q_2 (u_1- u_2),
\end{align*}
we infer that
\begin{align*}
\|u_1(q_1 - q_2)\|_{H^{-s}(\Omega)} 
& \leq 
C \left( c_s \left\| \lim\limits_{x_{n+1}\rightarrow 0} x_{n+1}^{1-2s} \p_{n+1} \tilde{u}_1 - \lim\limits_{x_{n+1} \rightarrow 0} x_{n+1}^{1-2s} \p_{n+1} \tilde{u}_2 \right\|_{H^{-s}(\Omega)} \right.\\
& \left. \quad + \|q_2 (u_1 - u_2)\|_{H^{-s}(\Omega)}
 \right)\\
 & \leq C(\|q_2\|_{C^{0,s}(\Omega)},s) \left( \left\| \lim\limits_{x_{n+1}\rightarrow 0} x_{n+1}^{1-2s} \p_{n+1} \tilde{u}_1 - \lim\limits_{x_{n+1} \rightarrow 0} x_{n+1}^{1-2s} \p_{n+1} \tilde{u}_2 \right\|_{H^{-s}(\Omega)} \right. \\
 & \left. \quad
+ \|u_1 - u_2\|_{H^{-s}(\Omega)}
 \right)\\
 & \leq C(\|q_1\|_{C^{0,s}(\Omega)}, \|q_2\|_{C^{0,s}(\Omega)},s) \frac{C\|f\|_{H^{s+\epsilon}(W)}}{\left|\log\left(C\frac{\|f\|_{H^{s+\epsilon}(W)}}{\|(\Lambda_{q_1}-\Lambda_{q_2})f\|_{H^{-s}(W)}} \right) \right|^{\mu}}.
\end{align*}
Combined with the previous estimates this entails that 
\begin{align}
\label{eq:est_domain}
\|u_1 (q_1 - q_2)\|_{L^2( \Omega)}
\leq C(\|q_1\|_{C^{0,s}(\Omega)}, \|q_2\|_{C^{0,s}(\Omega)},s ) \frac{C\|f\|_{H^{s+\epsilon}(W)}}{\left|\log\left(C\frac{\|f\|_{H^{s+\epsilon}(W)}}{\|(\Lambda_{q_1}-\Lambda_{q_2})f\|_{H^{-s}(W)}} \right) \right|^{\mu/2}}.
\end{align}

Finally, we note that for any $x_0\in \Omega$ and any $r\in (0,r_0)$ with $\dist(x_0, \partial \Omega) \geq r_0 >0$ by an iteration of the boundary doubling inequality from Theorem \ref{thm:doubl_bound} we have
\begin{align*}
\int\limits_{B_{r}(x_0)} u^2_1 dx \geq C r^{\beta}
\end{align*} 
for some constants $C>0$ and $\beta>0$ depending on $W,n,\Omega, \|q_1\|_{L^{\infty}(\Omega)}, \|q_2\|_{L^{\infty}(\Omega)}$. 
Combining this, the fact that $u_j \in H^{s}(\R^n)$ for $j\in\{1,2\}$ and \eqref{eq:est_domain} with the observation of Lemma \ref{lem:boundaryAVS} and the assumption that $\supp(q_j) \subset \Omega' \Subset \Omega$, then concludes the proof of Theorem \ref{thm:single_meas_stab}.
\end{proof}

\bibliographystyle{alpha}
\bibliography{citationsFT}

\end{document}